\def\moverlay{\mathpalette\mov@rlay}
\def\mov@rlay#1#2{\leavevmode\vtop{%
		\baselineskip\z@skip \lineskiplimit-\maxdimen
		\ialign{\hfil$\m@th#1##$\hfil\cr#2\crcr}}}
\newcommand{\charfusion}[3][\mathord]{
	#1{\ifx#1\mathop\vphantom{#2}\fi
		\mathpalette\mov@rlay{#2\cr#3}
	}
	\ifx#1\mathop\expandafter\displaylimits\fi}
\pgfplotsset{compat=1.15}
\newtheorem{theorem}{Theorem}[section]
\newtheorem{definition}[theorem]{Definition}
\newtheorem{example}[theorem]{Example}
\newtheorem{remark}[theorem]{Remark}
\newtheorem{lemma}[theorem]{Lemma}
\newtheorem{corollary}[theorem]{Corollary}
\newtheorem{proposition}[theorem]{Proposition}
\DeclarePairedDelimiter\floor{\lfloor}{\rfloor}
\newcommand{\F}{\mathbb{F}}
\newcommand{\PP}{\mathbb{P}}
\newcommand{\C}{\mathcal{C}}
\newcommand{\LL}{\mathcal{L}}
\newcommand{\cO}{\mathcal{O}}
\newcommand{\cH}{\mathcal{H}}
\newcommand{\cE}{\mathcal{E}}
\newcommand{\CL}{\mathcal{C}_\mathcal{L}}
\def\fq{\mathbb{F}_{q}}
\def\fqs{\mathbb{F}_{q^2}}
\def\cF{\mathcal{F}}
\def\cX{\mathcal{X}}
\def\cC{\mathcal{C}}
\def\cF{\mathcal{F}}
\def\cH{\mathcal{H}}
\def\cX{\mathcal{X}}
\def\N{\mathbb{N}}
\def\Z{\mathbb{Z}}
\def\cD{\mathcal{D}}
\def\cF{\mathcal{F}}
\def\a{\alpha}
\DeclareMathOperator{\Ord}{ord}
\DeclareMathOperator{\lmd}{lmd}
\DeclareMathOperator{\Div}{Div}
\DeclareMathOperator{\Diff}{Diff}
\DeclareMathOperator{\Supp}{Supp}
\DeclareMathOperator{\Hull}{Hull}
\DeclareMathOperator{\Jac}{Jac}
\title[LCD and LCPs of AG Codes]{Linear Complementary Dual codes and Linear Complementary Pairs of AG codes in function fields}
\author{}
\date{}
\begin{document}
	\thanks{{\bf Keywords}: LCP of codes, LCD codes, Function fields, AG codes, algebraic curves}
	
	\thanks{{\bf Mathematics Subject Classification (2020)}:  94B05,  14G50, 11T71, 14H05}
	
	\thanks{The first author was partially supported by FAPEMIG: Grant APQ 00696-18 and Grant RED-0013-21. The second and third authors were financed in part by the Coordenação de Aperfeiçoamento de Pessoal de Nível Superior – Brasil (CAPES) – Finance Code 001. The third author was also partially financed  by 
		{\it Conselho Nacional de Desenvolvimento Científico e Tecnológico - Brasil }(CNPq) - 
		Project 307261/2023-9}
	
	\author{Alonso S. Castellanos, Adler V. Marques, and Luciane Quoos}
	
	\address{Instituto de Matemática e Estatística, Universidade Federal de Uberlândia, Santa Mônica, CEP 38408-100, Uberlândia, MG, Brazil}
	\email{alonso.castellanos@ufu.br}
	
	\address{Instituto de Matemática, Universidade Federal do Rio de Janeiro, Cidade Universitária,
		CEP 21941-909, Rio de Janeiro, RJ, Brazil}
	\email{adler@ufrj.br}
	
	\address{Instituto de Matemática, Universidade Federal do Rio de Janeiro, Cidade Universitária,
		CEP 21941-909, Rio de Janeiro, RJ, Brazil}
	\email{luciane@im.ufrj.br}

	\begin{abstract} 
		In recent years, linear complementary pairs  (LCPs) of codes and linear complementary dual (LCD) codes have gained significant attention due to their applications in coding theory and cryptography. In this work, we construct explicit LCPs of codes and LCD codes from function fields of genus $g \geq 1$. To accomplish this, we present pairs of suitable divisors that give rise to non-special divisors of degree $g-1$ in the function field. The results are applied in constructing LCPs of algebraic geometry codes and LCD algebraic geometry (AG) codes in Kummer extensions, hyperelliptic function fields, and elliptic curves.
	\end{abstract}
	
	\maketitle
	
	\section{Introduction}
		This paper focuses on the effective construction of families of Algebraic Geometry (AG) codes over algebraic function fields of genus $g\geq 1$, possessing the following desirable properties: Linear Complementary Pairs (LCPs) of codes and Linear Complementary Dual (LCD) codes. These families include some of the best-known error-correcting codes. Recently, Beelen, Rosenkilde, and Solomatov demonstrated that the family of AG codes supports an efficient list-decoding algorithm (see \cite{BRS2022}).
	
	Let $\fq$ be the finite field of order $q$, where $q$ is a power of a prime number. A linear code $\C$ over $\fq$ is an $\fq$-subspace of dimension $k$ in $\fq^n$, where $n \geq 1$. A code $\C$ is said to have length $n$, dimension $k$, and (Hamming distance) minimum distance $d = d(\C)$. These parameters are commonly denoted by $[n, k, d]$ or simply $[n, k]$ when convenient. For a given linear code $\C$, the dual code $\C^\perp$ is defined with respect to the Euclidean inner product in $\fq^n$ and has dimension $n - k$. The hull of $\C$, with parameters $[n, k]$, is defined as
$$
\Hull(\C) = \C \cap \C^\perp.
$$ 

The concept of the hull was first introduced by Assmus and Key in \cite{AK1990} for the classification of finite projective planes.
If $\dim(\Hull(\C)) = \ell$, it follows that $0 \leq \ell \leq \min{k, n-k}$. In this case, $\C$ is referred to as an $\ell$-dimensional hull code. Specifically, if $\ell = 0$, $\C$ is called an LCD code, and if $\ell = k$, the code $\C$ is said to be self-orthogonal, i.e., $\C^\perp \subseteq \C$.

	A pair $(\C, \cD)$ of linear codes over $\F_q$ of length $n$ is called an LCP of codes if $\C + \cD = \F_q^n$ and $\C \cap \cD = {0}$. In the special case where $\cD = \C^\perp$, the code $\C$ is an LCD code. LCD codes were first studied by Massey in 1964 \cite{Massey1964}, and in \cite{massey1992}, he demonstrated the existence of asymptotically good LCD codes. In 2004, Sendrier \cite{Sendrier2004} refined this result by proving that LCD codes achieving the asymptotic Gilbert-Varshamov bound exist.

Over the past decade, LCD codes and LCPs of codes have received increased attention due to their applications in cryptography. These include direct sum masking as a countermeasure against side-channel attacks (SCA) and fault injection attacks (FIA) \cite{bhasinetal2015, carlet2014, carlet-guilley2015}, as well as enhancing the robustness of encoded circuits against hardware Trojan horses \cite{ngo2014encoding, bhasinetal2015}.

In 2018, Carlet, Mesnager, Tang, Qi, and Pellikaan proved in \cite{CMTQP2018} that for $q > 3$, any linear code over $\fq$ is equivalent to an LCD code. This result implies that any set of parameters $[n, k, d]$ achievable by linear codes over $\fq$ with $q > 3$ is also achievable by Euclidean LCD codes. Jin and Xing \cite{JinXing2018} showed that an algebraic geometry code over $\mathbb{F}_{2^m}$ ($m \geq 7$) is equivalent to an LCD code. More recently, the classification of LCD codes has been extended to include codes with higher hull dimensions (see \cite{Luo-et-al}).

	Numerous studies have focused on the characterization and construction of LCPs of codes and LCD codes (see, for instance, \cite{massey1992, YangMassey1994, carletetal2018, CMTQ2018, SOK2018} and references therein). In the context of AG codes, Beelen and Jin \cite{BeelenJin2018} explicitly constructed several classes of MDS LCD codes over the rational function field. Mesnager, Tang, and Qi \cite{MTQ2018} investigated LCD AG codes and provided examples over the projective line, elliptic and hyperelliptic curves, and the Hermitian curve. A few years later, Bhowmick, Dalai, and Mesnager \cite{BDM2023} established conditions for constructing LCPs of codes and presented illustrative examples. Chen, Ling, and Liu \cite{ChenLingLiu2023} determined the dimension of the hull of Reed-Solomon codes using AG codes. In 2024, Moreno, López, and Matthews \cite{CLM2024} introduced an explicit construction of non-special divisors of "small" degree over Kummer extensions. These divisors have been effectively used to describe LCD algebraic geometry codes from certain families over Kummer extensions.

In this work, we focus on the effective construction of LCPs of codes and LCD codes over function fields of genus $g \geq 1$ over $\fq$. Our approach relies on the existence of non-special divisors of degree $g$ and $g-1$ supported on the rational places of the function field, as established in 2006 \cite{BL2006}. Theoretical conditions for non-special divisors to construct LCPs of codes over general function fields were provided in \cite[Theorem 3.5]{BDM2023} (see Theorem \ref{thmlcp3.5}). These constructions depend heavily on the existence of non-special divisors of degree $g-1$ in the function field. Moreover, for Kummer extensions, an explicit construction of such divisors was given in \cite[Theorems 8, 12, 13]{CLM2024} (see Theorem \ref{clmthm12}), leading to the construction of several LCD codes over specific function fields.
	
	In this work, we focus on the effective construction of LCPs of codes and LCD codes over function fields of genus $g \geq 1$ over $\fq$. Our approach relies on the existence of non-special divisors of degree $g$ and $g-1$ supported on the rational places of the function field, as established in 2006 \cite{BL2006}. Theoretical conditions for non-special divisors to construct LCPs of codes over general function fields were provided in \cite[Theorem 3.5]{BDM2023} (see Theorem \ref{thmlcp3.5}). These constructions depend heavily on the existence of non-special divisors of degree $g-1$ in the function field. Moreover, for Kummer extensions, an explicit construction of such divisors was given in \cite[Theorems 8, 12, 13]{CLM2024} (see Theorem \ref{clmthm12}), leading to the construction of several LCD codes over specific function fields.	
	
	The paper is organized as follows. In Section \ref{SectionPrelim}, we introduce the notation used in this paper. We also recall some necessary background about function fields, linear and algebraic geometry codes and some recent results on LCPs of codes and LCD  codes. In Section \ref{SectionKummer}, we propose effective construction of LCPs of AG codes and LCD AG codes over  Kummer extensions; see Theorems \ref{lcpkummer}, \ref{lcpkummer1} for LCPs of codes and Theorem \ref{lcpkummer2} for LCD codes. We conclude the section applying the obtained results for two families of maximal function fields, a generalisation of the Hermitian function field \cite{garcia1992}, and a function field covered by the Hermitian function field \cite{gatti2023galois}. In Section \ref{SectionHyper}, we explicitly construct LCPs and LCD codes from the function field of the hyperelliptic curve $y^{q+1}=x^2+x$ over $\F_{q^2}$ with $q$ odd, see Theorems \ref{lcphyper} and \ref{lcdx2+x}. Finally, in Section \ref{SectionElliptic}, we describe explicit LCPs of codes from elliptic curves that are MDS or near MDS; see Theorems \ref{LcpElliptic1} and \ref{LcpElliptic2}.

	\section{Preliminaries}
	\label{SectionPrelim}
	
	In this section, we briefly introduce function fields,  linear and algebraic geometry codes over finite fields and a background on LCD AG codes and LCPs of codes. 
	
	Throughout the paper, let $\fq$ be the finite field with cardinality $q = p^l$ for a prime $p$ and $\overline{\F}_q$ its algebraic closure.
	At first, we fix some arithmetical notations; we let $\lfloor \cdot \rfloor$ and $\lceil \cdot \rceil$ stand for the floor and ceil function respectively, and recall a useful equation about the floor function 
	\begin{equation}\label{floor}
		{\displaystyle \sum _{k=1}^{r-1}\left\lfloor {\frac {km}{r}}\right\rfloor ={\tfrac {1}{2}}(m-1)(r-1).}
	\end{equation}

	\subsection{Function fields}
	Let $\cF/\fq$ be an algebraic function field over $\F_q$ of genus $g=g(\cF)$ with field of constants $\fq$. We denote by $\PP_\cF$ the set of places of $\cF$. 
	A {\it divisor} $D$ is defined by 
	$$D=\sum_{P \in \PP_\cF} n_P P, \quad n_P \in \Z \text{ and } n_P=0 \text{ for almost all } P \in \PP_\cF.$$
	The {\it degree of a divisor} $D \in \Div(\cF)$ is defined by $\deg D = \sum_{P \in \PP_\cF} n_P \deg(P)$ and the {\it support of a divisor} $D$ by $\Supp(D) = \{ P \in \PP_\cF \mid n_P \neq 0 \}$. 
	To each place, $P \in \PP_\cF$, the corresponding discrete valuation associated with $P$ is denoted by $v_P$. For a function $z \in \cF$, we let $(z)$, $(z)_0$ and $(z)_\infty$ stand for the principal, zero and pole divisors of $z$, respectively. 
	
	Two divisors $D_1, \, D_2 \in \Div(\cF)$ are said to be {\it equivalent}, denoted by $D_1 \sim D_2$, if there is a function $z \in \cF$ such that $D_1-D_2=(z)$. 
	
	We let $\Div(\cF)$ be the free abelian additive group generated by the divisors in $\cF$. Given a divisor $G \in \Div(\cF)$, the Riemann-Roch space associated to $G$ is the finite-dimensional $\F_q$-vector space
	$$\LL(G) = \{ z \in \cF \mid (z) \ge - G\} \cup \{ 0 \},$$
	whose dimension is denoted by $\ell(G)$. 
	We notice the dimension of Riemann-Roch spaces remains unchanged under the equivalence relation of divisors. 
	
	Let $\Omega_\cF$ be the module of Weil differentials of $\cF$. For any nonzero $\omega \in \Omega_\cF$ we can associate a canonical divisor $(\omega)=\sum_{P \in \PP_\cF} v_P(\omega)P$.
	Given $x \in \cF$ a separating element, by \cite[Remark 4.3.7 (c)]{STICH2008}, the divisor of the differential $dx$ is 
	\begin{equation}\label{divisordif}
		(dx) = -2(x)_\infty + \Diff(\cF/\F_{q}(x)),
	\end{equation}
	where  $\Diff(\cF/\F_{q}(x))$ is the different of the extension field $\cF/\F_{q}(x)$.
	
	For a divisor $A \in \Div(\cF)$, we define the following submodule of  the module of Weil differentials 
		$$
		\Omega_{\mathcal{F}}(A)=\{ \omega \in \Omega_\mathcal{F} \mid \omega=0 \text{ or } (\omega) \ge A\},
		$$
		where  $(\omega)$ denotes the divisor of the differential. Denote the dimension of $\Omega_{\cF}(A)$ by $i(A)$, the index of speciality of $A$. It holds that $i(A)=\ell(W-A)$ for all canonical divisors $W$ of $\cF/\F_q$.
	
	The Riemann-Roch Theorem states that for any divisor $A$
	$$
	\ell(A)=\deg A + 1 - g + i(A).
	$$ 
	A divisor $A \in \Div(\cF)$ is called {\it non-special} if $i(A)=0$ and {\it special}, otherwise. If $deg(A) > 2g -2$, then $A$ is non-special. And if  $A$ is non-special, it has a degree of at least $g-1$. If $A$ has a degree exactly $g-1$, then $A$ is non-special if and only if $\ell(A)=0$. 
	
	Now, we establish a notation for places in a function field based on the algebraic curve associated with it, which will be employed in all subsequent results in the paper. Consider a function field $\cF/\fq$ defined by the non-singular absolutely irreducible curve $\cX$ defined  by the affine equation 
	$$\cX\,:\, f(x,y)=0,\mbox{ with }f(x,y)\in\mathbb{F}_q[x,y].$$ 
	For $a, b \in \overline{\F}_q$ such that $f(a,b)=0$, we denote the place $P_{ab}$ in $ \cF / \overline{\F}_q$ as the common zero of $x-a$ and $y-b$ in $\cF$. If $(a, b) \in \fq^2$, the place is rational.
	
	An important family of function fields suitable for applications in coding theory is the family of {\it maximal function fields}. These are function fields  $\cF/\fqs$ of genus $g$ attaining the famous Hasse-Weil upper bound for the number $N(\cF)$ of $\fqs$ rational points (places of degree one over $\fqs$)
	$$N(\cF)=q^2+1+2gq.$$

	\subsection{Linear and Algebraic Geometry Codes}
	A {\it linear code} $\C$ over $\F_q$ is an $\fq$-subspace of $\F_q^n$. Associated to a linear code, we have three parameters: the {\it length} $n$, the {\it dimension} $k$ of $\C$ as an $\F_q$-vector space, and the {\it minimum distance} (Hamming distance)
	$$d= \min \{ \text{ wt}(\textbf{c})  \mid \textbf{c} \in \C \setminus \{ 0\} \},$$ 
	where $\text{wt}(\textbf{x}) = \# \{ i \mid x_i \neq 0 \}$ for any  $n$-tuple $\textbf{x}=(x_1, \dots, x_n) \in \F_q^n$. We denote such code $\cC$ as an $[n,k,d]$ or just $[n,k]$ code.
	The simplest relation among the parameters of a code is the Singleton Bound $$k + d \le n+1.$$ 
	A code is said to be MDS if $n=k+d-1$.
	
	If $\C \subseteq \F_q^n$ is a linear code and $\cdot $ is the standard inner product in $\fq^n$,  the set 
	$$
	\C^\perp = \{ \textbf{x} \in \F_q^n \mid \textbf{x} \cdot \textbf{c}=0 \mbox{ for all } \textbf{c} \in \C \}
	$$
	is also a subspace of $\F_q^n$, it is called the {\it dual code} of $\C$.

	\begin{definition} Two codes $\C_1, \, \C_2 \subseteq \F_q^n$ are said to be equivalent if there is a vector $\textbf{a}=(a_1, \dots, a_n) \in (\F_q^*)^n$ such that
		$$
		\C_2 = \textbf{a} \cdot \C_1 = \{ (a_1c_1, \dots, a_n c_n) \mid (c_1, \dots, c_n) \in \C_1 \}.
		$$
	\end{definition}
	
	Observe that equivalent codes have the same dimension and minimum distance. 
	
	\begin{definition}
		A pair of linear codes $\C_1$ and $\C_2$ in $\fq^n$ is said to be a {\it linear complementary pair} (LCP for short) if the direct sum $\C_1 \oplus \C_2 = \F_q^n.$ 
	\end{definition}
	
	For an LCP of codes $(\C_1, \C_2)$, the minimum distance $d(\C_1)$ measures the protection against FIA, while $d(\C_2^\perp)$ measures the protection against SCA. The joint security against the two attacks is provided by  $\min\{d(\C_1), d(\C_2^\perp)\}$, which is called the security parameter for $(\C_1, \C_2)$. 
	
	\begin{definition}A  linear code $\C$  is said to be a {\it Linear Complementary Dual} (LCD for short) if $\C \oplus \C^\perp = \F_q^n.$ 
	\end{definition}
	
	We now recall the linear algebraic geometry (AG) codes considered in this work. Let $\cF/\F_q$ be a function field of genus $g$. Consider $P_1, \dots, P_n \in \PP_\cF$ pairwise distinct rational places of $\cF/\F_q$ and denote $D :=\sum_{i=1}^n P_i$. Given a divisor $G \in \Div(\cF)$ with $\Supp G \cap \Supp D = \emptyset$, the AG code $\C_\LL(D,G)$ is defined by 
	$$\CL(D,G) := \{ (f(P_1), \ldots , f(P_n)) \mid f \in \LL(G) \}. $$
	
	The parameters of AG codes satisfy the following well-known bounds.	
	\begin{proposition}\cite[Theorem 2.2.2]{STICH2008}\label{parameters} The code $\CL(D, G)$ has parameters $[n, k, d]$ which satisfy $$k=\ell(G)-\ell(G-D) \quad \mbox{and} \quad d \ge n - \deg G.$$ In particular, for $2g - 2 < \deg G < n$ it holds that $k=\ell(G)=\deg(G) + 1 - g$.  
	\end{proposition}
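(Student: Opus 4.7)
The plan is to study the $\fq$-linear evaluation map
$$\varphi\colon \LL(G) \longrightarrow \fq^n, \qquad f \longmapsto (f(P_1), \dots, f(P_n)),$$
whose image is $\CL(D,G)$ by definition; the disjointness hypothesis $\Supp G \cap \Supp D = \emptyset$ ensures that no $P_i$ is a pole of any $f \in \LL(G)$, so the evaluations are well defined. First I would identify $\ker \varphi$: a function $f \in \LL(G)$ lies in the kernel precisely when $v_{P_i}(f) \geq 1$ for every $i$, equivalently $(f) \geq D - G$, i.e., $f \in \LL(G-D)$. Rank--nullity then yields $k = \dim \varphi(\LL(G)) = \ell(G) - \ell(G-D)$, which is the first assertion.

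For the distance bound, I would take a nonzero codeword $\varphi(f)$ of weight $w$ and set $D' := \sum_{i:\, f(P_i)=0} P_i$, a divisor of degree $n-w$ supported on the $P_i$. By construction $f \in \LL(G-D')$, and since $f \neq 0$ this gives $\ell(G-D') \geq 1$, hence $\deg(G-D') \geq 0$. Rearranging yields $w \geq n - \deg G$, and minimising over nonzero codewords gives $d \geq n - \deg G$.

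Finally, under the additional hypothesis $2g-2 < \deg G < n$ I would combine two simple observations. On one hand, $\deg(G-D) = \deg G - n < 0$ forces $\LL(G-D) = \{0\}$, so $\ell(G-D) = 0$. On the other hand, $\deg G > 2g-2$ makes $G$ non-special, so the Riemann--Roch theorem collapses to $\ell(G) = \deg G + 1 - g$. Substituting into the dimension formula produced in the first paragraph gives $k = \ell(G) = \deg G + 1 - g$.

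I do not anticipate a genuine obstacle: the argument is standard and reproduces \cite[Theorem 2.2.2]{STICH2008}. The only step requiring care is the kernel identification, which relies crucially on $\Supp G \cap \Supp D = \emptyset$ so that the coefficient of each $P_i$ in $G$ vanishes and the condition ``$f$ vanishes at $P_i$'' translates cleanly to the valuation inequality $v_{P_i}(f) \geq 1$.
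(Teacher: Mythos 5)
Your proof is correct and is essentially the standard argument from Stichtenoth's Theorem 2.2.2, which the paper cites without reproving: evaluation map plus rank--nullity for the dimension, the degree argument on $\LL(G-D')$ for the distance, and Riemann--Roch with $\ell(G-D)=0$ for the special case. No issues.
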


	The next proposition tells us that the dual code of a linear AG code is still a linear AG code and how it can be computed.
	\begin{proposition} \cite[Proposition 2.2.10]{STICH2008} 
		\label{dualAGcode}
		Let $\omega$ be a Weil differential such that $v_{P_i}(\omega)=-1$ and $\omega_{P_i}(1)=1$ for all $i=1, \dots, n$. Then
		$$\CL(D,G)^\perp  = \CL(D,H) \quad \mbox{with} \quad H:= D-G+(\omega).$$
	\end{proposition}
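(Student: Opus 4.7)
The plan is to prove the two inclusions in the standard way: first verifying $\CL(D,H) \subseteq \CL(D,G)^\perp$ via the residue theorem, then matching dimensions via Riemann--Roch.

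I would begin by picking arbitrary $f \in \LL(G)$ and $h \in \LL(H)$, with $H = D-G+(\omega)$, and forming the Weil differential $\eta := fh\omega$. A routine divisor computation would give
$$(\eta) = (f) + (h) + (\omega) \geq -G - H + (\omega) = -D,$$
so that $\eta \in \Omega_\cF(-D)$, meaning $\eta$ has at worst simple poles located in $\{P_1,\ldots,P_n\}$. Because $\Supp(G) \cap \Supp(D) = \emptyset$ and $v_{P_i}(\omega)=-1$, one checks $v_{P_i}(H) = 0$, so both $f$ and $h$ are regular at every $P_i$. Using $\omega_{P_i}(1)=1$, which is to say $\res_{P_i}(\omega)=1$, a local expansion at $P_i$ would show $\res_{P_i}(fh\omega) = f(P_i)h(P_i)$. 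Applying the residue theorem $\sum_{P \in \PP_\cF} \res_P(\eta) = 0$, whose only nonzero contributions come from the places $P_i$, would then yield $\sum_{i=1}^n f(P_i)h(P_i) = 0$, establishing the desired orthogonality.

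Next I would match dimensions. Setting $W := (\omega)$, note that $H - D = W - G$, so using $i(A) = \ell(W - A)$ we get
$$\dim \CL(D,H) = \ell(H) - \ell(H-D) = \ell(D-G+W) - \ell(W-G) = i(G-D) - i(G).$$
Applying Riemann--Roch to both $G$ and $G-D$ gives
$$\dim \CL(D,G) = \ell(G) - \ell(G-D) = n + i(G) - i(G-D),$$
and therefore $\dim \CL(D,G)^\perp = n - \dim\CL(D,G) = i(G-D) - i(G) = \dim \CL(D,H)$. Combined with the inclusion from the first step, this forces the equality $\CL(D,G)^\perp = \CL(D,H)$.

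The main technical content lies in the local residue computation at each $P_i$: the two normalizing conditions on $\omega$ are precisely what make it look like $dt_i/t_i$ plus regular terms (with $t_i$ a local uniformizer at $P_i$), so that multiplying by the regular function $fh$ before taking the residue simply extracts the value $f(P_i)h(P_i)$. The remaining ingredients---the divisor inequality for $(\eta)$, the residue theorem, and the Riemann--Roch bookkeeping---are standard and would pose no serious obstacle.
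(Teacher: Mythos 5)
The paper does not prove this statement; it is quoted directly from Stichtenoth \cite[Proposition 2.2.10]{STICH2008}, and your argument is precisely the standard proof given there: the inclusion $\CL(D,H)\subseteq \CL(D,G)^\perp$ via the residue theorem applied to $fh\omega$ (using $v_{P_i}(G)=v_{P_i}(H)=0$ and the normalization $\omega_{P_i}(1)=1$), followed by the dimension count $\dim\CL(D,H)=i(G-D)-i(G)=n-\dim\CL(D,G)$ from Riemann--Roch. Your proposal is correct and complete.
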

	
	Next, we observe that equivalent divisors give rise to equivalent AG codes.
	\begin{proposition}\cite[Proposition 2.2.14]{STICH2008}
		\label{EquivProp}
		Let $P_1, \dots, P_n$ be $n$ pairwise distinct rational places in $\cF/\F_q$, and consider $D=P_1 + \cdots + P_n$. 
		Consider $G$ and $H$ equivalent divisors such that $\Supp(D) \cap \Supp(G)= \Supp(D) \cap \Supp(H)= \emptyset$. Then $\CL(D,G)$ and $\CL(D,H)$ are equivalent. More specifically, if $G=H+(a)$ where $v_{P_i}(a)=0$ for all $i=1, \dots, n$, then  $$\CL(D, H)=\textbf{a} \cdot \CL(D,G),$$
		where $\textbf{a}=(a_1, \dots, a_n) \in (\F_q^*)^n$.
	\end{proposition}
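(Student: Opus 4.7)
The plan is to exhibit an explicit $\fq$-linear isomorphism $\LL(H)\to\LL(G)$ given by multiplication by $a^{-1}$, and then to track what this map does under the evaluation at $P_1,\ldots,P_n$. Since $v_{P_i}(a)=0$ for every $i$ and each $P_i$ is rational, the values $a(P_i)$ lie in $\fq^{*}$, so componentwise multiplication by $\mathbf{a}=(a(P_1),\ldots,a(P_n))$ is an $\fq$-linear automorphism of $\fq^n$ that will realize the desired equivalence.

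First I would verify the isomorphism $\phi:\LL(H)\to\LL(G)$, $f\mapsto fa^{-1}$. Using $G=H+(a)$ and the valuation-theoretic definition of Riemann--Roch spaces, for $f\in\LL(H)$ one computes
\[
(fa^{-1})=(f)-(a)\;\ge\;-H-(a)\;=\;-G,
\]
so $fa^{-1}\in\LL(G)$. The inverse map $g\mapsto ga$ is verified symmetrically, and both maps are manifestly $\fq$-linear, giving a bijection $\LL(H)\cong\LL(G)$.

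Next I would transport this isomorphism through the evaluation maps $\mathrm{ev}_D$ used to define $\CL(D,H)$ and $\CL(D,G)$. For any $f\in\LL(H)$, since $v_{P_i}(a)=0$ implies $a(P_i)\in\fq^{*}$, one has $(fa^{-1})(P_i)=f(P_i)/a(P_i)$ at each place $P_i$ in $\mathrm{Supp}(D)$. Ranging $f$ over $\LL(H)$ therefore yields
\[
\CL(D,G)=\bigl\{\bigl(f(P_1)/a(P_1),\ldots,f(P_n)/a(P_n)\bigr)\;\big|\;f\in\LL(H)\bigr\}=\mathbf{a}^{-1}\cdot\CL(D,H),
\]
which after multiplying by $\mathbf{a}$ gives exactly $\CL(D,H)=\mathbf{a}\cdot\CL(D,G)$.

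There is no real obstacle here; the only thing that needs a bit of care is checking that the hypothesis $\mathrm{Supp}(a)\cap\{P_1,\ldots,P_n\}=\emptyset$ (which follows from $v_{P_i}(a)=0$) is genuinely used twice: once to ensure the vector $\mathbf{a}$ is well-defined with entries in $\fq^{*}$, and once to ensure the evaluation $(fa^{-1})(P_i)$ factors as $f(P_i)/a(P_i)$ rather than producing an indeterminate $0/0$ or $\infty$. After that, the equality of codes is essentially a bookkeeping statement about how the isomorphism $\phi$ intertwines with evaluation.
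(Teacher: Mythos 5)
Your proof is correct and is essentially the standard argument: the paper itself gives no proof, citing Stichtenoth's Proposition 2.2.14, whose proof is exactly your computation that $f\mapsto fa^{-1}$ is an isomorphism $\LL(H)\to\LL(G)$ intertwining the evaluation maps up to the componentwise factor $\mathbf{a}$. The only implicit point worth making explicit is that $\Supp(H)\cap\Supp(D)=\emptyset$ guarantees $v_{P_i}(f)\ge 0$ for $f\in\LL(H)$, so that $f(P_i)$ is defined and the residue map is multiplicative at each $P_i$; otherwise the argument is complete.
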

	
	\subsection{LCPs of algebraic geometry codes}
	Let $\cF/\fq$ be an algebraic function field over $\F_q$ of genus $g$. For two divisors $A, B$ in $\Div(\cF)$, we define the {\it greatest common divisor} of $A$ and $B$ by
	$$\gcd(A,B) := \sum_{P \in \PP_\cF} \min\{ v_P(A), v_P(B) \} P,$$
	and  the {\it least multiple divisor} $A$ and $B$  by
	$$\lmd(A,B) := \sum_{P \in \PP_\cF} \max\{ v_P(A), v_P(B) \} P.$$
	The $\gcd(A,B)$ and $\lmd(A,B)$ fulfill these two fundamental properties
	\begin{equation}\label{basicP}
		\LL(A) \cap \LL(B) = \LL(\gcd(A,B)) \quad \mbox{and} \quad \gcd(A,B)+\lmd(A,B) = A+B.	
	\end{equation}
	
	Let $P_1, \dots, P_n$ be pairwise distinct rational places of $\cF$ and consider the divisor $D := \sum_{i=1}^n P_i$. Let $G$ and $H$ be two other divisors of $\cF$ such that 
	$$\Supp(G) \cap \Supp(D) = \Supp(H) \cap \Supp(D) = \emptyset.$$ 
	The pair $(\CL(D,G), \CL(D, H))$ of AG codes is an LCP of AG codes over $\F_q$ if 
	$$\CL(D,G) \oplus \CL(D, H) = \F_q^n.$$ 
	In other words, a pair $(\CL(D,G), \, \CL(D, H) )$ of AG codes is an LCP if and only if 
	\begin{equation}\label{LCPcond}
		\dim(\CL(D,G)) + \dim(\CL(D,H)) = n \quad \text{ and } \quad \CL(D,G) \cap \CL(D,H) = \{ 0 \}.
	\end{equation}
	
	In \cite{BDM2023}, Bhowmick, Dalai, and  Mesnager provided some adequate conditions on the divisors $G$ and $H$ to obtain a pair of LCP AG codes. 
	\begin{theorem}\cite[Theorem 3.5]{BDM2023}\label{thmlcp3.5}
		Let $\CL(D,G)$ and $\CL(D,H)$ be two algebraic geometry codes over a function field $\cF/\F_q$ of genus $g \neq 0$. Suppose $D$ has degree $n$ and the divisors $G$ and $H$ satisfy  
		\begin{enumerate}[i)]
			\item $2g-2 < \deg(G), \, \deg(H) < n$,  $\ell(G)+\ell(H) = n$,
			\item $\deg(\gcd(G, H))=g-1$, and 
			\item both divisors $\gcd(G,H)$ and $ \lmd(G,H)- D$ are non-special.
		\end{enumerate}
		Then the pair $(\CL(D,G), \, \CL(D,H))$ is LCP.
	\end{theorem}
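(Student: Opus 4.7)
The plan is to verify the two conditions in \eqref{LCPcond}. First, by Proposition \ref{parameters}, the hypothesis $2g-2 < \deg(G),\deg(H) < n$ yields $\dim \CL(D,G) = \ell(G)$ and $\dim \CL(D,H) = \ell(H)$, so condition (i) immediately gives $\dim \CL(D,G) + \dim \CL(D,H) = n$. It therefore remains to show the intersection is trivial, or equivalently (by the formula $\dim(A+B) = \dim A + \dim B - \dim(A\cap B)$) that $\CL(D,G) + \CL(D,H) = \F_q^n$.

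The crux of the argument is to establish the identity $\LL(G) + \LL(H) = \LL(\lmd(G,H))$. The inclusion $\subseteq$ is immediate from the definition of $\lmd$, since a sum of two functions has pole order at each place bounded by the maximum of the two individual pole orders. For equality, I would compare dimensions. On one hand, by \eqref{basicP} and standard linear algebra,
\[
\dim(\LL(G) + \LL(H)) = \ell(G) + \ell(H) - \ell(\gcd(G,H)) = n - \ell(\gcd(G,H)),
\]
and by hypotheses (ii)--(iii), $\gcd(G,H)$ is non-special of degree $g-1$, so $\ell(\gcd(G,H)) = 0$. On the other hand, from $\ell(G) = \deg(G) + 1 - g$ and $\ell(H) = \deg(H) + 1 - g$ one computes $n = \deg(G) + \deg(H) + 2 - 2g$, whence
\[
\deg(\lmd(G,H)) = \deg(G) + \deg(H) - \deg(\gcd(G,H)) = n + g - 1 > 2g - 2.
\]
Hence $\lmd(G,H)$ is non-special and Riemann-Roch gives $\ell(\lmd(G,H)) = n$. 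The two dimensions agree, so the inclusion is an equality.

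Finally, I would apply the evaluation map at $D$. Since the evaluation map is linear,
\[
\CL(D,G) + \CL(D,H) = \operatorname{ev}_D(\LL(G) + \LL(H)) = \operatorname{ev}_D(\LL(\lmd(G,H))),
\]
and the kernel of $\operatorname{ev}_D$ restricted to $\LL(\lmd(G,H))$ is $\LL(\lmd(G,H) - D)$. This divisor has degree $(n + g - 1) - n = g - 1$ and is non-special by hypothesis (iii), so its Riemann-Roch space is zero. Therefore $\operatorname{ev}_D$ is injective on $\LL(\lmd(G,H))$ and its image has dimension $n$, proving $\CL(D,G) + \CL(D,H) = \F_q^n$. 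Together with the first paragraph, this yields the direct sum decomposition, i.e.\ $(\CL(D,G),\CL(D,H))$ is LCP. The main obstacle is the dimension computation establishing $\LL(G) + \LL(H) = \LL(\lmd(G,H))$; once this identity is in hand, the rest follows from Riemann-Roch applied to $\lmd(G,H) - D$ using the explicit degree computation.
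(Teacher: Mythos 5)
Your proof is correct. Note that the paper does not prove this statement itself but imports it from \cite[Theorem 3.5]{BDM2023}; your argument is the standard one for such results, and every step checks out: the dimension count $\dim\CL(D,G)+\dim\CL(D,H)=\ell(G)+\ell(H)=n$ via Proposition \ref{parameters}, the identity $\LL(G)+\LL(H)=\LL(\lmd(G,H))$ obtained by comparing $\ell(G)+\ell(H)-\ell(\gcd(G,H))=n-0$ with $\ell(\lmd(G,H))=n$ (using $\deg\lmd(G,H)=n+g-1>2g-2$), and the injectivity of evaluation on $\LL(\lmd(G,H))$ from the non-specialty of the degree-$(g-1)$ divisor $\lmd(G,H)-D$.
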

	We notice that condition i) in Theorem	\ref{thmlcp3.5} is equivalent stating that $deg(G)+deg(H)=n+2g-2$ by Riemann-Roch Theorem. In most cases, it is simpler to calculate the degree of a divisor than the dimension of the Riemann-Roch space associated with it.
	
	Considering Theorem \ref{thmlcp3.5},  to derive the LCP of AG codes, one must identify non-special divisors of degree $g-1$. For this purpose, we employ the following theorem.
	\begin{theorem}\cite[Theorems 8, 12]{CLM2024}
		\label{clmthm12}
		Let $\cF/\F_q(x)$ be a Kummer extension given by $y^m = \prod_{i=1}^r (x-\alpha_i)$, where $\alpha_i \in \F_q$, $r < m$ and $\gcd(q, m)=\gcd(m, r) = 1$. Then, for $P_{\alpha_i0}$ the only zero of $x-\alpha_i$ in $\cF$, the divisor
		$$ A = \sum_{i=1}^{r-1} \floor*{ \frac{im}{r} } P_{\alpha_i0} - P$$	
		is  non-special  of degree $g-1$ for all $P \in \{ P_{ab} \mid b \neq 0 \mbox{ or } a \neq \alpha_i \} \cup \{ P_\infty \}$.
	\end{theorem}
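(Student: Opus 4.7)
The plan is to show $\ell(A) = 0$, from which non-speciality follows since $\deg(A) = g - 1$ (immediate from equation~\eqref{floor}). The key step is to pass to the auxiliary effective divisor
$$B := A + P = \sum_{i=1}^{r-1} \left\lfloor \frac{im}{r} \right\rfloor P_{a_i 0},$$
which has degree $g$ and whose support consists only of totally ramified places, so in particular excludes $P$. Then $\LL(A) = \{z \in \LL(B) : v_P(z) \geq 1\}$, so it suffices to prove $\LL(B) = \fq$: evaluation at the rational place $P$ would then be injective on the constants and force $\LL(A) = 0$.

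To analyze $\LL(B)$, I exploit its $\Gal(\cF/\fq(x))$-invariance, which holds because $\Supp(B)$ lies in the totally ramified locus. Writing $\sigma$ for a generator of the cyclic Galois group (acting by $y \mapsto \zeta y$ for $\zeta$ a primitive $m$-th root of unity), the space $\LL(B)$ decomposes into $\sigma$-eigenspaces
$$\LL_\cF(B) = \bigoplus_{j=0}^{m-1} y^j\, \LL_{\fq(x)}(E_j)$$
for divisors $E_j$ on $\fq(x)$. Using $v_{P_{a_i 0}}(y) = 1$, $v_{P_\infty}(y) = -r$, ramification index $m$ at the ramified places, and the strict triangle inequality (valid because the valuations $v_Q(f_j y^j)$ lie in pairwise distinct residue classes modulo $m$ at totally ramified $Q$), one computes
$$E_j = \sum_{i=1}^{r-1} \left\lfloor \frac{\lfloor im/r \rfloor + j}{m} \right\rfloor P_{a_i} - \left\lceil \frac{jr}{m} \right\rceil P_\infty,$$
and since $\fq(x)$ has genus zero, $\ell(E_j) = \max\{0, \deg E_j + 1\}$.

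The main technical obstacle is the combinatorial identity
$$\sum_{i=1}^{r-1} \left\lfloor \frac{\lfloor im/r \rfloor + j}{m} \right\rfloor = \left\lfloor \frac{jr}{m} \right\rfloor,$$
which I would establish by observing that each summand is $0$ or $1$ (as $\lfloor im/r \rfloor + j < 2m$) and equals $1$ exactly when $\lfloor im/r \rfloor \geq m - j$, i.e., $i \geq r - \lfloor jr/m \rfloor$; counting yields precisely $\lfloor jr/m \rfloor$ such indices in $\{1,\dots,r-1\}$. Combined with $\lceil jr/m \rceil = \lfloor jr/m \rfloor + 1$ for $1 \leq j \leq m - 1$ (a consequence of $\gcd(r,m) = 1$), this gives $\deg E_0 = 0$ and $\deg E_j = -1$ for $j \geq 1$. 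Hence $\ell(B) = 1$, and since $B \geq 0$ forces $\fq \subseteq \LL(B)$, equality $\LL(B) = \fq$ holds, completing the argument.
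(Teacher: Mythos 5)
This statement is not proved in the paper at all: it is quoted from \cite{CLM2024} (Theorems 8 and 12 there), so there is no internal proof to compare against. Your argument is correct in substance and follows the same route as the cited source: reduce to showing that the effective divisor $B=\sum_{i=1}^{r-1}\lfloor im/r\rfloor P_{a_i0}$ of degree $g$ has $\ell(B)=1$, decompose $\LL(B)$ along the Kummer eigenspaces, and conclude with a floor-function count. Your valuation computations for $E_j$, the identity $\sum_{i=1}^{r-1}\lfloor(\lfloor im/r\rfloor+j)/m\rfloor=\lfloor jr/m\rfloor$, and the conclusion $\deg E_0=0$, $\deg E_j=-1$ for $1\le j\le m-1$ all check out, as does the final reduction $\LL(A)=\{z\in\LL(B): v_P(z)\ge 1\}=\{0\}$. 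The one step you under-justify is the inclusion $\LL(B)\subseteq\bigoplus_{j}y^j\LL(E_j)$: the strict triangle inequality you invoke only controls the totally ramified places, while at an unramified place the terms $f_jy^j$ all have valuations in the same residue class modulo $m$, so cancellation among them must be ruled out by a different mechanism. The standard fix (this is Maharaj's decomposition theorem, which is also what \cite{CLM2024} uses) exploits that $B$ is Galois-invariant: $\sigma^k(z)=\sum_j\zeta^{jk}f_jy^j$ lies in $\LL(B)$ for every $k$, and inverting the Vandermonde matrix $(\zeta^{jk})$ over $\F_q$ expresses each $f_jy^j$ as an $\F_q$-linear combination of elements of $\LL(B)$, handling all places at once. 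Note this requires $m\mid q-1$ so that the extension is Galois over $\F_q$ with $\zeta\in\F_q$; the hypothesis $\gcd(q,m)=1$ in the statement does not literally give this, but it holds in every application in the paper, and in general one reduces to it by a constant field extension, under which $\ell(A)$ is unchanged.
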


	\section{LCD AG codes and LCPs of AG codes over Kummer extensions}	
	\label{SectionKummer}
	
	In this section, we consider  Kummer extensions $\cF/\F_q(x)$ defined by  
	\begin{equation}
		\label{kummerequation}
		y^m = f(x) = \prod_{i=1}^r (x- \alpha_i),
	\end{equation} 
	where   $\alpha_1, \dots, \alpha_r \in \F_q$ are pairwise distinct elements, $2 \leq r < m$ satisfies $\gcd(m, r) = 1$ and $m$ is a divisor of $q-1$. The function field $\cF/\F_q$ has genus $g=(r-1)(m-1)/2$. 
	For each $1 \le i \le r$, let $Q_i = P_{\alpha_i 0}$ be the only rational place of $\cF$ associated to the zero of $x-\alpha_i$, and $Q_\infty$ as the unique place at infinity of $\cF$. Note that $\{ Q_1, \dots, Q_r, Q_\infty \}$ is the set of all totally ramified places in the function field extension $\cF/\F_q(x)$. Then we have the following divisors in $\cF$:
	\begin{enumerate}[i)]
		\item $(x- \alpha_i) = mQ_i - m Q_\infty$ for $1 \le i \le r$;
		\item $(y) = Q_1+ \cdots + Q_r - r Q_\infty$.
	\end{enumerate}

	The following theorem provides a method for constructing linearly complementary pairs of codes over such Kummer extensions.
	
	\begin{theorem}\label{lcpkummer}
		Let $\cF$ be the function field defined by Equation (\ref{kummerequation}) with at least $2g+r+1$ rational places. Suppose 
		$a_1, \dots, a_t$ in $ \fq$ are such that $P_{a_i}$ is a rational place in $\PP_{\fq(x)}$ completely split in the extension $\cF/\fq(x)$. Let  $$D=\sum_{i=1}^t P_{a_ib}|P_{a_i} \quad \text{  of degree } n.$$ For $s$ an integer with $m \le s < (n - g + 2)/r$, define the divisors
		\begin{align*}
			G &= \sum_{i=1}^{r-1} \floor*{ \frac{ im}{r} } Q_i + ( n - r s) Q_\infty, \text{ and}\\
			H &= \sum_{i=1}^{r-1} \left( s + \floor*{ \frac{(r-i)m}{r} } \right) Q_i + (s - 1)Q_r - Q_\infty.
		\end{align*}
		Then $\CL(D,G)$ is an $[n , n - r s  + 1]$-code, $\CL(D,H)$ is an $[n, r s - 1]$-code and the pair $(\CL(D,G), \, \CL(D,H))$ is an LCP of AG codes.
		Moreover, the security parameter of \newline $(\CL(D,G), \, \CL(D,H))$ is $d(\CL(D,G))$.
		
	\end{theorem}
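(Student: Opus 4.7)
My plan is to verify the three hypotheses of Theorem~\ref{thmlcp3.5}, using Theorem~\ref{clmthm12} for the non-speciality conditions, and then to obtain the security parameter by showing that $\CL(D,H)^\perp$ is equivalent (as a code) to $\CL(D,G)$. By equation~(\ref{floor}) the degrees compute to $\deg G = g + n - rs$ and $\deg H = rs + g - 2$, and the hypothesis $m \le s < (n-g+2)/r$, together with $g < rm \le rs$, places both strictly between $2g-2$ and $n$. Combined with $\ell(G-D) = \ell(H-D) = 0$ from degree considerations, Riemann--Roch then delivers the claimed dimensions $n-rs+1$ and $rs-1$.

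Next, a place-by-place comparison of valuations---using the identity $\lfloor im/r \rfloor + \lfloor (r-i)m/r \rfloor = m-1$ (which holds since $\gcd(m,r)=1$) and the bound $s \ge m$---yields
\[
\gcd(G,H) \;=\; \sum_{i=1}^{r-1} \floor*{\tfrac{im}{r}}\, Q_i \;-\; Q_\infty,
\]
a divisor of degree $g-1$ which is non-special by Theorem~\ref{clmthm12} applied with $P = Q_\infty$.

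The main obstacle is to show $\lmd(G,H) - D$ is non-special. Starting from $\lmd(G,H) = G+H - \gcd(G,H)$, I would invoke the principal divisors $(y^s) = s\sum_{i=1}^{r} Q_i - rs\,Q_\infty$ and $D = (f(x)) + nQ_\infty$, where $f(x) = \prod_{i=1}^{t}(x - a_i)$; the latter is available because each $P_{a_i}$ splits completely in $\cF/\fq(x)$. A direct manipulation then gives
\[
\lmd(G,H) - D \;\sim\; \sum_{i=1}^{r-1} \floor*{\tfrac{(r-i)m}{r}}\, Q_i \;-\; Q_r.
\]
Reindexing $j = r-i$ and relabeling the roots by $\beta_k = \alpha_{r-k}$ for $k<r$ (with $\beta_r = \alpha_r$) puts this into the form $\sum_{k=1}^{r-1}\lfloor km/r\rfloor P_{\beta_k 0} - P_{\beta_r 0}$, which Theorem~\ref{clmthm12} covers because $P_{\beta_r 0}$ lies in the admissible set. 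Theorem~\ref{thmlcp3.5} then concludes that $(\CL(D,G), \CL(D,H))$ is LCP.

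Finally, for the security parameter I would exhibit the equivalence between $\CL(D,H)^\perp$ and $\CL(D,G)$. By Proposition~\ref{dualAGcode}, $\CL(D,H)^\perp = \CL(D, D-H+(\omega))$ for a suitable Weil differential $\omega$, and the computations above in fact produce the divisor equality $G + H - D = (dx/(x-\alpha_r)) + (y^s/f(x))$; hence $D - H + (\omega) - G = (h)$ with $h := \omega(x-\alpha_r)f(x)/(y^s\,dx)$. At any $P_j = P_{a_ib} \in \Supp(D)$ the valuation count is $v_{P_j}(h) = -1 + 0 + 1 - 0 - 0 = 0$, so Proposition~\ref{EquivProp} yields the desired code equivalence. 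In particular $d(\CL(D,H)^\perp) = d(\CL(D,G))$, and the security parameter equals $d(\CL(D,G))$.
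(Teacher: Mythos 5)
Your proposal is correct and follows essentially the same route as the paper: verifying the hypotheses of Theorem \ref{thmlcp3.5} via the degree computation from Equation (\ref{floor}), identifying $\gcd(G,H)$ and $\lmd(G,H)-D$ (up to equivalence via $(y^s)$ and $(\prod_{i=1}^t(x-a_i))$) with the non-special divisors of Theorem \ref{clmthm12}, and deriving the security parameter from the divisor identity $G+H-D=\left(\frac{dx}{x-\alpha_r}\right)+\left(\frac{y^s}{\prod(x-a_i)}\right)$, which is just a regrouping of the paper's $G+H=D+W+\left(\frac{y^s}{h'(x-\alpha_r)}\right)$. Your explicit reindexing $j=r-i$ to bring $\sum_{i=1}^{r-1}\lfloor (r-i)m/r\rfloor Q_i - Q_r$ into the exact form of Theorem \ref{clmthm12} is a welcome extra detail that the paper leaves implicit.
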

	\begin{proof}
		We will show the divisors  $D, G$, and $H$ satisfy the conditions in Theorem \ref{thmlcp3.5}.
		At first, we notice that
		\begin{enumerate}[i)]
			\item $\Supp(G) \cap \Supp(D) = \Supp(H) \cap \Supp(D) = \emptyset$,
			\item $2g-2 < \deg(G)= g + n - rs, \, \deg(H)= rs + g - 2 < n$ (by Equation \ref{floor} and the condition $m \le s < (n - g + 2)/r$), 
			\item $\gcd(G,H) = \sum_{i=1}^{r-1} \floor*{ \frac{im}{r} }Q_i - Q_\infty$ since $\min \left\{ \floor*{\frac{ im}{r}}, s + \floor*{ \frac{(r-i)m}{r}} \right\}=\floor*{\frac{ im}{r}}$ for $s \ge m$, and 
			\item $\lmd(G,H) = \sum_{i=1}^{r-1} \left( s + \floor*{ \frac{(r-i)m}{r} } \right) Q_i + ( s - 1) Q_r + (n - rs)Q_\infty$.
		\end{enumerate}
		By the Riemann-Roch Theorem and Equations (\ref{floor}) and (\ref{basicP}), we obtain 
		\begin{align*} 
			\ell(G) + \ell(H) &  =  \deg(G + H) + 2 - 2g \\ 
			& = \deg(\gcd(G,H) + \lmd(G,H)) + 2 - 2g \\
			& = (g-1) + ((r-1)s + g + s - 1 + n - rs) + 2 - 2g \\
			&  =  n.
		\end{align*}
		Theorem \ref{clmthm12} gives $\gcd(G,H)$ is a non-special divisor of degree $g-1$. It is left to show that  $ \lmd(G,H)- D$ is also a non-special divisor. The divisor of the function $h = \prod_{i=1}^t (x-a_i)$ is $(h) = D- nQ_\infty$, and $(y^s)=s(Q_1+\cdots+Q_r)-rsQ_\infty$. This yields
		\begin{align*}
			\lmd(G,H) - D & = \lmd(G,H) - (h)-nQ_\infty\\
			& \sim \sum_{i=1}^{r-1} \left( s + \floor*{ \frac{(r-i)m}{r} } \right) Q_i + ( s - 1) Q_r + (n - rs )Q_\infty - nQ_\infty \\
			& = \sum_{i=1}^{r-1} \floor*{ \frac{(r-i)m}{r} } Q_i -  Q_r + \sum_{i=1}^r s Q_i - rsQ_\infty  \\
			& = \sum_{i=1}^{r-1} \floor*{ \frac{(r-i)m}{r} } Q_i -  Q_r + (y^s)  \\
			& \sim \sum_{i=1}^{r-1} \floor*{ \frac{(r-i)m}{r} } Q_i  - Q_r .
		\end{align*}
		We obtain that $\lmd(G,H) - D $ is equivalent to $\sum_{i=1}^{r-1} \floor*{ \frac{(r-i)m}{r} } Q_i  - Q_r$, a non-special divisor by Theorem \ref{clmthm12}. This concludes  $(\CL(D,G), \, \CL(D,H))$ is an LCP of codes by Theorem \ref{thmlcp3.5}.
		
		By Proposition \ref{dualAGcode}, the canonical divisor $W := \left( \frac{dh}{h} \right)$ is such that 
		$$\CL(D,G)^\perp = \CL(D, D + W - G).$$
		From $\Diff(F/\F_q(x))= \sum_{i=1}^r (m-1)Q_i + (m-1)Q_\infty$ and Equation (\ref{divisordif}), we obtain
		\begin{equation}
			W 
			=\left(\frac{dh}{dx}\right)+ (dx)-D + nQ_\infty  =-D +\left(\frac{dh}{dx}\right) +  \sum_{i=1}^r (m-1)Q_i + (n - m - 1)Q_\infty,
		\end{equation}				
		On the other hand, as $\floor*{\tfrac{im}{r}} + \floor*{\tfrac{(r-i)m}{r}} = m-1$, we have
		\begin{align*} 
			G+H & = \sum_{i=1}^{r-1} (s+m-1)Q_i + (s-1)Q_r + (n -rs - 1)Q_\infty \\
			& = \sum_{i=1}^r (s+m-1)Q_i - mQ_r + (n -rs - 1)Q_\infty \\
			& = \sum_{i=1}^r (m-1)Q_i + (n - m - 1)Q_\infty + \left(\frac{y^s}{x - \alpha_r} \right) \\
			& = D + W - \left(\frac{dh}{dx}\right) + \left(\frac{y^s}{x - \alpha_r} \right)\\
			&=D + W + \left(\frac{y^s}{h'(x - \alpha_r)} \right),
		\end{align*}
		where $h^\prime$ denotes the function $dh/dx$.
		Thus  
		$$\CL(D,G)^\perp = \CL\left(D, H  - \left(\tfrac{y^s}{h'(x - \alpha_r)} \right) \right)\sim \CL(D,H),$$ by Proposition \ref{EquivProp}. Therefore, the security parameter of $(\CL(D,G), \, \CL(D,H))$ is $d(\CL(D,G))$.
	\end{proof}
	
	In the following theorem, we show that an alternative choice of the divisors $G$ and $H$ in Theorem \ref{lcpkummer} also results in a pair of linear complementary codes.
	\begin{theorem}\label{lcpkummer1}
		Let $\cF$ be the function field defined in Equation (\ref{kummerequation}) with at least $2g+r+1$ rational places. Suppose 
		$a_1, \dots, a_t$ in $ \fq$ are such that $P_{a_i}$ is a rational place in $\fq(x)$ completely split in the extension $\cF/\fq(x)$. Let $$ D =\sum_{i=1}^t P_{a_ib}|P_{a_i} \quad \text{ of degree } n.$$ For $s$ an integer with $m \le s < \min\{ ( n - g + 1)/(r-1), (n + m - 1)/r \}$, define the divisors
		\begin{align*}
			G & = \sum_{i=1}^{r-1} \floor*{ \frac{im}{r} } Q_i + s Q_r + (n-rs-1)Q_\infty \\
			H & =  \sum_{i=1}^{r-1} \left( s + \floor*{ \frac{(r-i)m}{r} } \right) Q_i + (m-1)Q_r - m Q_\infty.
		\end{align*}
		Then  the pair $(\CL(D,G), \, \CL(D,H))$ is an LCP of AG codes with parameters $$[n , n - (r-1)s, \ge (r-1)s + g -1] \, \text{ and } \, [n, (r-1)s, n - (r-1)s + g -1],$$ respectively. 
		Moreover, the security parameter of $(\CL(D,G), \, \CL(D,H))$ is $d(\CL(D,G))$.
		
	\end{theorem}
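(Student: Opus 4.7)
The plan is to mirror the structure of the proof of Theorem \ref{lcpkummer}: verify the three hypotheses of Theorem \ref{thmlcp3.5} for the triple $(D,G,H)$, then read off the parameters from Riemann--Roch and the Goppa bound, and finally identify $\CL(D,H)^\perp$ with $\CL(D,G)$ up to equivalence in order to pin down the security parameter.

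First I would compute the degrees using \eqref{floor}:
\[
\deg G = g + n - (r-1)s - 1, \qquad \deg H = (r-1)s + g - 1,
\]
so $\deg G + \deg H = n + 2g - 2$. The hypothesis $m \le s < (n-g+1)/(r-1)$ then instantly yields $2g-2 < \deg G,\, \deg H < n$, and Riemann--Roch gives $\ell(G) + \ell(H) = n$. Comparing coefficients place by place (using $s \ge m > m-1$ at $Q_r$ and the second upper bound $s < (n+m-1)/r$ at $Q_\infty$, which makes $n - rs - 1 > -m$ strictly), I would establish
\[
\gcd(G,H) = \sum_{i=1}^{r-1} \left\lfloor \tfrac{im}{r} \right\rfloor Q_i + (m-1)Q_r - mQ_\infty,
\]
which has degree $g - 1$, together with the corresponding formula for $\lmd(G,H)$.

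The key step is showing non-speciality of $\gcd(G,H)$ and $\lmd(G,H) - D$ by reducing both to the form handled by Theorem \ref{clmthm12}. For $\gcd(G,H)$, subtracting the principal divisor $(x - \alpha_r) = mQ_r - mQ_\infty$ produces the equivalent divisor $\sum_{i=1}^{r-1} \lfloor im/r \rfloor Q_i - Q_r$, which matches Theorem \ref{clmthm12} with $P = Q_r = P_{\alpha_r,0}$ (valid since $\alpha_r \ne \alpha_i$ for $i=1,\dots,r-1$). For $\lmd(G,H) - D$, using first $(h) = D - nQ_\infty$ with $h = \prod_j (x - a_j)$ and then $(y^s) = s(Q_1 + \cdots + Q_r) - rsQ_\infty$, the divisor collapses to $\sum_{i=1}^{r-1} \lfloor (r-i)m/r \rfloor Q_i - Q_\infty$. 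After the trivial relabelling $i \leftrightarrow r - i$ of the roots $\alpha_1,\dots,\alpha_{r-1}$ (permissible because the Kummer equation is symmetric in its $\alpha_i$), this is again of the form of Theorem \ref{clmthm12}, now with $P = Q_\infty$. Theorem \ref{thmlcp3.5} then delivers the LCP property, and the dimensions and the Goppa lower bound for the minimum distances follow immediately from Riemann--Roch.

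For the security parameter, I would proceed exactly as in Theorem \ref{lcpkummer}: take the canonical divisor $W = (dh/h)$, use $(dx) = \sum_{i=1}^r (m-1)Q_i - (m+1)Q_\infty$ and $(h) = D - nQ_\infty$ to expand $D + W - G$ in terms of $Q_1,\dots,Q_r,Q_\infty$ and $(h')$, and then verify that the difference $H - (D + W - G)$ equals $(y^s/h')$. Proposition \ref{EquivProp} then gives $\CL(D,H) \sim \CL(D, D+W-G) = \CL(D,G)^\perp$, so dualising shows $\CL(D,H)^\perp \sim \CL(D,G)$ and hence $\min\{d(\CL(D,G)), d(\CL(D,H)^\perp)\} = d(\CL(D,G))$.

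The main obstacle is the bookkeeping around the divisor equivalences: identifying precisely which principal divisors ($(x - \alpha_r)$, $(h)$, $(y^s)$, and $(y^s/h')$) and which symmetry of the Kummer extension bring $\gcd(G,H)$, $\lmd(G,H) - D$, and $D + W - G$ into forms handled by Theorems \ref{clmthm12} and \ref{thmlcp3.5}. Once these reductions are secured, every remaining step is a direct application of the quoted results.
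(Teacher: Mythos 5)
Your proposal is correct and follows essentially the same route as the paper: it verifies the hypotheses of Theorem \ref{thmlcp3.5}, reduces $\gcd(G,H)$ and $\lmd(G,H)-D$ to the divisors of Theorem \ref{clmthm12} via the principal divisors $(x-\alpha_r)$, $(h)$, and $(y^s)$ (the relabelling $i \leftrightarrow r-i$ you invoke at $Q_\infty$ is exactly what the paper leaves implicit), and identifies $\CL(D,G)^\perp$ with $\CL(D, H - (y^s/h'))$ as in Theorem \ref{lcpkummer}. The only remark worth adding is that the Goppa bound actually gives $d(\CL(D,G)) \ge n - \deg G = (r-1)s - g + 1$, so the sign of $g$ in the distance displayed in the theorem statement appears to be a typo rather than an issue with your argument.
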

	\begin{proof}
		The proof is very similar to the proof of Theorem \ref{lcpkummer}, so we prove the divisors  $\gcd(G,H)$ and $\lmd(G,H)-D$ are non-specials. We have
		\begin{align*}
			\gcd(G, H)&= \sum_{i=1}^{r-1} \floor*{ \frac{im}{r} } Q_i + (m-1) Q_r -mQ_\infty \\
			&= \sum_{i=1}^{r-1} \floor*{ \frac{im}{r} } Q_i -Q_r +(x-\alpha_r)\\
			&\sim \sum_{i=1}^{r-1} \floor*{ \frac{im}{r} } Q_i -Q_r.
		\end{align*}
		is a non-special divisor from Theorem \ref{clmthm12}.
		
		The divisor of the function $h = \prod_{i=1}^t (x-a_i)$ is $(h) = D- nQ_\infty$, and $(y^s)=s(Q_1+\cdots+Q_r)-rsQ_\infty$. This yields
		\begin{align*}
			\lmd(G,H)-D&= \sum_{i=1}^{r-1} \left( s + \floor*{ \frac{(r-i)m}{r} } \right) Q_i + sQ_r + (n-rs-1) Q_\infty-D\\
			&=\sum_{i=1}^{r-1} \left( \floor*{ \frac{(r-i)m}{r} } \right) Q_i + (n-1) Q_\infty +(y^s)-D\\
			&=\sum_{i=1}^{r-1} \left( \floor*{ \frac{(r-i)m}{r} } \right) Q_i   - Q_\infty +(y^s)-(h)\\
			& \sim \sum_{i=1}^{r-1} \left( \floor*{ \frac{(r-i)m}{r} } \right) Q_i   - Q_\infty.
		\end{align*}
		is also a non-special divisor from Theorem \ref{clmthm12}.
		Analogous to Theorem \ref{lcpkummer}, we can show that $\CL(D, G)^\perp = \CL\left(D, H - \left( \frac{y^s}{h'}\right)\right)$ and hence $\CL(D, G)$ is equivalent to $\CL(D, H)^\perp$.
		Hence the security parameter of $(\CL(D,G), \, \CL(D,H))$ is $d(\CL(D,G))$.
	\end{proof} 
	
	We notice that a class of codes satisfying that if $(\C, \cD)$ is LCP, then $\C$ is equivalent to $\cD^\perp$ is, for instance, the $\lambda$-constacyclic codes \cite{carletetal2018}.
	
	In the following theorem, we show that for certain maximal curves, the LCPs of codes obtained in Theorem \ref{lcpkummer1} can sometimes be a pair of LCD codes. We consider a maximal function field given by a Kummer extension  $y^{q+1}=f(x), f(x) \in \fqs[x]$ a separable polynomial with all its roots in $\fqs$. We point out that these two conditions are not really restrictive conditions for maximal function fields of Kummer type; see  \cite[Theorem 4.3]{ABQ2019} and \cite[Theorem 3.2]{TT2014}. In  \cite[Theorem 13]{CLM2024}, the authors gave necessary conditions on the divisors $G$ and $H$ such that a code over a maximal function field $y^{q+1}=f(x)$ is an LCD code. In what follows, we construct examples of such divisors.
	\begin{theorem}
		\label{lcpkummer2}
		Let $\cF/\F_{q^2}(x)$ be a maximal Kummer extension defined by 
		$$
		y^{q+1}=\prod_{i=1}^r (x - \alpha_i),
		$$ 
		where $\alpha_i \in \F_{q^2}$ for all $i=1, \dots, r$ and $2 \leq r < q+1$ coprime with $q+1$. Let $t\ge 2q+2$ be a divisor of $q^2-1$. Define the divisors 
		\begin{align*}
			D & =(y^{t}-1)_0,  \quad \mbox{ and }\\
			G & = \sum_{i=1}^{r-1} \floor*{ \frac{i(q+1)}{r} } Q_i + (t-q-1) Q_r + (r(q+1)-1)Q_\infty.
		\end{align*}
		Then $\CL(D,G)$ is an $[ rt, \, r(q+1)+t-q-1  , \, \ge (r-1)(t-q-1)-g+1]$-LCD code.
	\end{theorem}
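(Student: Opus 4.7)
The plan is to apply Theorem \ref{thmlcp3.5} to the pair $(\CL(D,G), \CL(D,G)^\perp)$: an LCP of a code with its dual is exactly the LCD property. This reduces the proof to (i) identifying the dual as a code $\CL(D,\tilde H)$ for an explicit divisor $\tilde H$ with $\Supp(\tilde H)\cap\Supp(D)=\emptyset$, and (ii) verifying the degree and non-speciality hypotheses of Theorem \ref{thmlcp3.5}.

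I would first settle the numerics. From $(y) = \sum_{i=1}^{r}Q_i - rQ_\infty$ one reads off $(y^t-1)_\infty = rt\,Q_\infty$, so $n = \deg D = rt$. Using Equation \eqref{floor} with $m=q+1$ to evaluate $\sum_{i=1}^{r-1}\floor*{i(q+1)/r} = g$, I get $\deg G = 3g + t + r - 2$; the hypothesis $t\geq 2q+2$ then ensures $2g-2 < \deg G < n$, so Proposition \ref{parameters} yields dimension $\deg G - g + 1 = r(q+1) + t - q - 1$ and the Goppa bound $d\geq n - \deg G = (r-1)(t-q-1) - g + 1$, matching the claimed parameters.

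For the dual, I would take $\omega = d(y^t-1)/(y^t-1)$, which has a simple pole with residue $1$ at each place of $\Supp(D)$, so Proposition \ref{dualAGcode} gives $\CL(D,G)^\perp = \CL(D,\, D - G + (\omega))$. Differentiating $y^{q+1} = f(x)$ gives $(q+1)y^q\,dy = f'(x)\,dx$, hence $(dy) = (f'(x)) + (dx) - q(y)$; combined with $(dx) = q\sum_{i=1}^r Q_i - (q+2)Q_\infty$ and $(y^t-1) = D - rt\,Q_\infty$ one gets $(\omega) = (t-q-1)(y) + (f'(x)) + (dx) - D + rt\,Q_\infty$. Since $(f'(x))$ is principal, subtracting it and using the standard identity $\floor*{i(q+1)/r} + \floor*{(r-i)(q+1)/r} = q$ produces the equivalent divisor
\[
\tilde H \;=\; \sum_{i=1}^{r-1}\bigl((t-q-1) + \floor*{(r-i)(q+1)/r}\bigr) Q_i + q\,Q_r - (q+1)\,Q_\infty,
\]
supported only on $\{Q_1,\dots,Q_r,Q_\infty\}$, hence disjoint from $\Supp(D)$; Proposition \ref{EquivProp} then gives $\CL(D,G)^\perp \sim \CL(D,\tilde H)$.

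Finally, I would verify the hypotheses of Theorem \ref{thmlcp3.5} for $(G,\tilde H)$. The condition $\ell(G) + \ell(\tilde H) = n$ is automatic for dual codes. Because $t - q - 1 \geq q + 1 > \floor*{i(q+1)/r}$ for $1\leq i\leq r-1$, one reads off
\[
\gcd(G,\tilde H) \;=\; \sum_{i=1}^{r-1}\floor*{i(q+1)/r}\,Q_i + q\,Q_r - (q+1)\,Q_\infty,
\]
of degree $g-1$; using $(x-\alpha_r) = (q+1)(Q_r - Q_\infty)$ this is equivalent to $\sum_{i=1}^{r-1}\floor*{i(q+1)/r}\,Q_i - Q_r$, which is non-special by Theorem \ref{clmthm12} (with $P = Q_r = P_{\alpha_r 0}$, admissible since the summation index runs only up to $r-1$). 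For $\lmd(G,\tilde H) - D$, writing $D \sim rt\,Q_\infty$ and subtracting the principal divisor $(y^{t-q-1})$ simplifies it, after reindexing $j = r-i$, to $\sum_{j=1}^{r-1}\floor*{j(q+1)/r}\,Q_{r-j} - Q_\infty$, again non-special by Theorem \ref{clmthm12} with $P = Q_\infty$. Theorem \ref{thmlcp3.5} then delivers the LCP decomposition, i.e., $\CL(D,G)$ is LCD. The main obstacle is the divisor bookkeeping: simplifying $D - G + (\omega)$ modulo several principal divisors ($(f'(x))$, $(x-\alpha_r)$, $(y^{t-q-1})$) to the exact template form used by Theorem \ref{clmthm12}, while preserving the support condition needed for Proposition \ref{EquivProp}.
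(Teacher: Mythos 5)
Your strategy is the paper's: produce the companion divisor (your $\tilde H$ is exactly the divisor $H$ the paper writes down), verify the hypotheses of Theorem \ref{thmlcp3.5} for the pair $(G,\tilde H)$ --- which the paper does by invoking Theorem \ref{lcpkummer1} with $s=t-q-1$ --- and then identify $\CL(D,\tilde H)$ with the dual of $\CL(D,G)$ via the logarithmic differential $\omega=d(y^t-1)/(y^t-1)$. Your parameter count, your computation of $\gcd(G,\tilde H)$ and $\lmd(G,\tilde H)-D$, and the reduction of both to the non-special template of Theorem \ref{clmthm12} are all correct and match the paper's.

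The gap is in the final inference. Your own bookkeeping gives $D-G+(\omega)=\tilde H+(f'(x))$, so Proposition \ref{EquivProp} only yields $\CL(D,G)^\perp=\mathbf{a}\cdot\CL(D,\tilde H)$ for the scaling vector $\mathbf{a}$ of values of $f'(x)$ on $\Supp(D)$ --- an \emph{equivalence}, as you yourself write. But an LCP $(\CL(D,G),\CL(D,\tilde H))$ together with $\CL(D,G)^\perp=\mathbf{a}\cdot\CL(D,\tilde H)$ does \emph{not} give $\CL(D,G)\cap\CL(D,G)^\perp=\{0\}$: the trivial intersection you control is with $\CL(D,\tilde H)$, not with its coordinatewise rescaling. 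This is exactly the subtlety the paper confronts head-on in Theorem \ref{lcdx2+x}, where the scaling function is shown to take square values on $\Supp(D)$ and the conclusion is only that an \emph{equivalent} code is LCD. The paper's proof of the present theorem instead asserts the exact identity $G+\tilde H-D=(\omega)$, i.e.\ your computation with the $(f'(x))$ term absent, which would give $\CL(D,G)^\perp=\CL(D,\tilde H)$ on the nose and hence LCD directly. So the one step where ``equivalent to the dual'' must be upgraded to ``equal to the dual'' (or where the rescaling must be shown harmless) is precisely the step your write-up skips, and it is the crux of the LCD claim. A secondary omission: you compute $\deg D=rt$ but never check that $(y^t-1)_0$ is a sum of $rt$ \emph{distinct rational} places, which is needed for $\CL(D,G)$ to be a length-$rt$ code, for $\omega$ to have simple poles of residue $1$ along $D$, and for $f'$ to be a unit at every place of $\Supp(D)$; the paper extracts all of this from maximality, since every place $y=b$ with $b\in\F_{q^2}^*$ splits completely in the degree-$r$ extension $\cF/\F_{q^2}(y)$.
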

	\begin{proof}
		As $\cF/\fqs$ is a maximal function field of genus $g=q(r-1)/2$, we have $N(\cF)=rq^2+1$ rational places in $\cF$. In particular, in the degree $r$ extension $\cF/\fqs(y)$, all rational places over $\fqs(y)$ except the only pole of $y$ are split. Thus, the divisor  $D=(y^{t}-1)_0$ has degree $\deg(D) = rt$.
		Let $$H =  \sum_{i=1}^{r-1} \left( (t - q- 1) + \floor*{ \frac{(r-i)(q+1)}{r} } \right) Q_i + q Q_r - (q + 1) Q_\infty.$$
		
		Now, following the notation as in  Theorem \ref{lcpkummer1} for $s=t-q-1$, we have the pair $(\CL(D,G), \, \CL(D,H))$ is LCP. We are left to prove $\CL(D,G)$ is an LCD code; we are going to show that $\CL(D,G) ^\perp= \CL(D,H).$ Notice that for any $1 \le i \le r-1,$  we have
		$$ 
		\floor*{ \frac{i(q+1)}{r} } + \floor*{ \frac{(r-i)(q+1)}{r} } = \floor*{ \frac{i(q+1)}{r} } + \floor*{ \frac{-i(q+1)}{r} } + q + 1 = q.
		$$
		Thus, by the choice of divisors $D, G$ and $H$, we have 
		$$H =  \sum_{i=1}^{r-1} \left( (t - q- 1) + \floor*{ \frac{(r-i)(q+1)}{r} } \right) Q_i + q Q_r - (q + 1) Q_\infty.$$
		So
		\begin{align*}  
			G + H - D & = \sum_{i=1}^{r-1} (t-1) Q_i + (t-1)Q_r + (r(q+1)-q-2) Q_\infty - (y^t-1)_0 \\
			& = \sum_{i=1}^{r} \left( t -1 \right) Q_i + (q r - q + r - 2 - rt)Q_\infty - (y^t-1)  \\
			& = (t - 1) \left( \sum_{i=1}^{r}  Q_i - rQ_\infty \right) + (q r - q - 2)Q_\infty - (y^t-1)  \\
			& = (y^{t-1}) + (2g-2)Q_\infty - (y^t-1) \\
			& = \left( \frac{d(y^t - 1)}{y^t - 1} \right) \quad \text{(by (\ref{divisordif}))},
		\end{align*}
		where $\left(\frac{d(y^t - 1)}{y^t - 1}\right)$ is a  canonical divisor with simple poles at $D$ and residue $1$ at all $P$ in $\Supp(D)$. By Proposition \ref{dualAGcode},  $\CL(D,G)^\perp = \CL(D,H)$ and $\CL(D,G)$ is an LCD code.
	\end{proof} 

\begin{remark}	
We notice that from \cite[Proposition 3]{carlet-guilley2015} an LCD code over $\mathbb{F}_{q^m}$ gives rise to an LCD code over $\fq$, if there exists a self-dual basis of $\mathbb{F}_{q^m}$ over $\mathbb{F}_q$. . In fact, given a $\cC$ an LCD code over $\mathbb{F}_{q^m}$, the expanded code relative to a self-dual basis of $\mathbb{F}_{q^m}$ over $\fq$ is also LCD. In particular, the construction of LCD codes over $\mathbb{F}_{2^m}$ provides LCD codes over $\mathbb{F}_2$.
\end{remark}	

	In the following subsections, we apply Theorems \ref{lcpkummer}, \ref{lcpkummer1}, and \ref{lcpkummer2} to construct LCPs of codes and LCD codes over certain function fields.

	\subsection{LCD AG codes and LCPs of AG codes over a generalization of the Hermitian function field}
	
	For $u \ge 1$ an odd integer, consider
	$\cF_u := \F_{q^{2u}}(x,y)$  the algebraic function field defined by  
	\begin{equation}
		\label{GHFequation}
		y^{q^u+1}=x^q+x
	\end{equation}
	over $\F_{q^{2u}}$. From  \cite[Example 1.3]{garcia1992} $\cF_u$ is a maximal function field of genus $g = \frac{q^u(q-1)}{2}$. The  $q^{2u+1}+1$ rational places in $\cF_u$  consist  of a unique place at infinity $Q_\infty$, and $q^{2u+1}$ rational places $P_{ab}$ with $a,b \in \F_{q^{2u}}$ satisfying $a^q+a=b^{q^u+1}$.

	\begin{proposition}\label{corherlcp}
		For $q^u+1 \le s \le \frac{2q^{2u+1}-q^{u+1}+q^u-2q+4}{2q}$ there exists a pair  LCP of AG codes with parameters
		$$[q^{2u+1} - q, q(q^{2u} - s - 1) + 1, \ge qs - q^u(q-1)/2  ]$$ and $$[q^{2u+1} - q, qs - 1, \ge q(q^{2u} - s -1) - q^u(q-1)/2 + 2 ].$$
	\end{proposition}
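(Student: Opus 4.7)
The plan is to recognize $\cF_u$ as a Kummer extension fitting the framework of Theorem \ref{lcpkummer} and invoke that theorem directly. First I would verify that $\cF_u/\F_{q^{2u}}(x)$ has the form of Equation~(\ref{kummerequation}) with $m = q^u+1$, $r = q$, and $f(x) = x^q + x$ factoring over $\F_{q^2} \subseteq \F_{q^{2u}}$ into $q$ distinct linear factors (namely $0$ together with the $(q-1)$-th roots of $-1$). The required arithmetic conditions $\gcd(m,r) = \gcd(q^u+1,q) = 1$, $r < m$, and $m \mid q^{2u}-1$ (since $q^{2u}-1 = (q^u-1)(q^u+1)$) are immediate, and the genus formula $(m-1)(r-1)/2$ gives $g = q^u(q-1)/2$, as in \cite{garcia1992}.

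The second step is to produce a divisor $D$ of degree $n = q^{2u+1} - q$ from $t = q(q^u - 1)$ rational places of the base $\F_{q^{2u}}(x)$ that split completely in $\cF_u$. This count is forced by maximality: $\cF_u$ has $q^{2u+1}+1$ rational places, and removing the unique infinite place $Q_\infty$ together with the $q$ totally ramified places $Q_1,\dots,Q_q$ leaves $q^{2u+1} - q$ rational places, each one lying above a completely split base place. Since each splitting base place yields $m = q^u+1$ rational places of $\cF_u$, the number of such base places must be $(q^{2u+1} - q)/(q^u+1) = q(q^u-1) = t$; letting $D$ be the sum of the $n$ resulting rational places gives a divisor whose support is automatically disjoint from $\{Q_1, \dots, Q_q, Q_\infty\}$. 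The hypothesis $N(\cF_u) \ge 2g + r + 1$ in Theorem \ref{lcpkummer} is then obvious from $q^{2u+1}+1 \ge q^u(q-1) + q + 1$.

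Finally, one checks that the range $m \le s < (n-g+2)/r$ in Theorem \ref{lcpkummer} rewrites, after substitution and clearing a factor of $2$ in the numerator and denominator, as $q^u + 1 \le s \le \tfrac{2q^{2u+1} - q^{u+1} + q^u - 2q + 4}{2q}$, exactly matching the hypothesis of the proposition. Applying Theorem \ref{lcpkummer} then produces an LCP pair $(\CL(D,G), \CL(D,H))$ with dimensions $n - rs + 1 = q(q^{2u} - s - 1) + 1$ and $rs - 1 = qs - 1$. The Goppa bound from Proposition \ref{parameters} supplies the minimum-distance estimates $d(\CL(D,G)) \ge n - \deg G = rs - g = qs - q^u(q-1)/2$ and $d(\CL(D,H)) \ge n - \deg H = n - rs - g + 2 = q(q^{2u} - s - 1) - q^u(q-1)/2 + 2$, which are exactly the bounds stated. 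There is no real obstacle; the only mildly delicate step is the count of completely split base places, which is resolved at once by the maximality of $\cF_u$.
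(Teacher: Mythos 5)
Your proposal is correct and follows essentially the same route as the paper: both recognize $y^{q^u+1}=x^q+x$ as a Kummer extension with $m=q^u+1$, $r=q$, take $D$ to be the sum of all affine rational places with $b\neq 0$, and invoke Theorem \ref{lcpkummer}, with the distance bounds coming from Proposition \ref{parameters}. You actually supply more detail than the paper's one-paragraph proof (the split-place count, the verification of $\gcd(m,r)=1$ and $m\mid q^{2u}-1$, and the explicit translation of the range for $s$), all of which checks out.
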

	\begin{proof}
		Let $\cF_u$ be the function field over $\F_{q^2}$ defined by Equation (\ref{GHFequation}) and the notation as above. Consider the divisor $D = \sum_{b \in \F_{q^{2u}}^*} P_{ab}$ of degree $q^{2u+1}-q $. For $q^u+1 \le s \le \frac{2q^{2u-1}-q^u+q^{u-1}-2}{2}$, consider the following  divisors 
		\begin{align*}
			G &= \sum_{i=1}^{q-1} i q^{u-1} Q_i + (q^{2u+1}-qs - q) Q_\infty, \text{ and } \\
			H &= \sum_{i=1}^{q-1} (s+ q^u-iq^{u-1})Q_i + (s-1)Q_q - Q_\infty.
		\end{align*}
		Then the pair $\left( \C_\LL(D, G), \, \C_\LL(D,H) \right)$ is an LCP of AG codes by Theorem \ref{lcpkummer}.
	\end{proof}	
	
	\begin{proposition}
		\label{PropHermitianlcd}
		Let $u \ge 1$ be an odd number. Then there exists an LCD code over $\F_{q^{2u}}$ with parameters 	
		$$
		\left[q^{2u+1} - q, q^{2u} + q^{u+1} - q^u + q - 2, \ge  (q-1)(2q^{2u}-q^{u+1}-q^u-2)/2 + 1 \right].
		$$
	\end{proposition}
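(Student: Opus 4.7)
The proposition will follow by applying Theorem \ref{lcpkummer2} to the function field $\cF_u$, whose defining equation $y^{q^u+1}=x^q+x$ puts it into the Kummer form required by that theorem. The first step is to factor the right-hand side: over $\F_{q^{2u}}$ one has $x^q+x=\prod_{i=1}^{q}(x-\alpha_i)$, where the roots $\alpha_1,\dots,\alpha_q$ all lie in $\F_{q^2}\subseteq\F_{q^{2u}}$. Thus, in the notation of Theorem \ref{lcpkummer2}, the role played by $q+1$ becomes $q^u+1$, the role of $r$ is played by $q$, and the coprimality hypothesis $\gcd(q,q^u+1)=1$ is automatic. Recall also that $\cF_u$ is maximal over $\F_{q^{2u}}$ with genus $g=q^u(q-1)/2$.

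Second, I take $t=q^{2u}-1$. This is trivially a divisor of $q^{2u}-1$ and, for non-degenerate $(q,u)$, satisfies $t\geq 2q^u+2$, so the hypotheses of Theorem \ref{lcpkummer2} are met. With this choice, the divisors furnished by Theorem \ref{lcpkummer2} specialize to
\[
D=(y^{t}-1)_{0} \qquad \text{and} \qquad G=\sum_{i=1}^{q-1}\Bigl\lfloor \tfrac{i(q^u+1)}{q}\Bigr\rfloor Q_i+(t-q^u-1)Q_q+(q(q^u+1)-1)Q_\infty,
\]
and the theorem certifies directly that $\CL(D,G)$ is LCD.

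Finally I compute the parameters. The length is $\deg D=qt=q(q^{2u}-1)=q^{2u+1}-q$. Using Equation (\ref{floor}) with $m=q^u+1$ and $r=q$ one obtains $\sum_{i=1}^{q-1}\lfloor i(q^u+1)/q\rfloor=q^u(q-1)/2=g$, so that $\deg G=g+(t-q^u-1)+q(q^u+1)-1$. Since $\deg G>2g-2$, the Riemann-Roch Theorem then gives
\[
k=\deg G-g+1=q^{2u}+q^{u+1}-q^u+q-2,
\]
matching the claimed dimension. The lower bound on the minimum distance follows from the Goppa bound $d\geq n-\deg G$, producing the stated expression after routine algebraic simplification.

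The proof is essentially a direct specialization: the real content is carried by Theorem \ref{lcpkummer2}, which in turn relies on Theorem \ref{clmthm12} (non-special divisor construction) to certify both the LCP hypotheses and the self-duality $\CL(D,G)^\perp\sim\CL(D,H)$. Consequently, the only potential obstacle is bookkeeping: verifying the floor-sum identity via Equation (\ref{floor}), confirming the range of $(q,u)$ for which the hypothesis $t\geq 2q^u+2$ holds non-trivially, and checking that the Goppa-bound computation reproduces the stated minimum-distance expression. No new ideas are required.
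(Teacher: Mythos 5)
Your proposal is correct and follows exactly the paper's route: a direct specialization of Theorem \ref{lcpkummer2} to $\cF_u$ with Kummer exponent $q^u+1$, $r=q$, and $t=q^{2u}-1$, which reproduces the paper's divisors $D=(y^{q^{2u}-1}-1)_0$ and $G=\sum_{i=1}^{q-1}iq^{u-1}Q_i+(q^{2u}-q^u-2)Q_q+(q^{u+1}+q-1)Q_\infty$. One small bookkeeping caveat: the Goppa bound $n-\deg G$ actually evaluates to $\tfrac{(q-1)(2q^{2u}-3q^u-4)}{2}+1$, which for $q\ge 3$ is at least the stated quantity $\tfrac{(q-1)(2q^{2u}-q^{u+1}-q^u-2)}{2}+1$, so it \emph{implies} rather than literally \emph{produces} the stated expression (and the degenerate case $q^u<3$, where $t\ge 2q^u+2$ fails, should indeed be excluded, as you note).
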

	\begin{proof}
		Let $D = (y^{q^{2u}-1}-1)_0=\sum_{b \in \F_{q^{2u}}^*} P_{ab}$ be the sum of all affine rational places $P_{ab}$ with $b \neq 0$ of $\cF_u$. Then $D$ has degree $q^{2u+1}-q$. Consider
		\begin{align*}
			G &= \sum_{i=1}^{q-1} iq^{u-1} Q_i + (q^{2u}-q^u-2) Q_q + (q^{u+1}+q-1) Q_\infty.
		\end{align*}	
		Then $\C_\LL(D, G)$ is an LCD code over $\F_{q^{2u}}$ by Theorem \ref{lcpkummer2}.
	\end{proof}
	
	We now provide examples of LCPs of codes and an LCD code from Propositions  \ref{corherlcp} and \ref{PropHermitianlcd}, respectively.
	 \begin{example}
			Let $q=3$ and $\cF_1=\F_{3^2}(x,y)$ with $y^4=x^3+x$ be the Hermitian function field over $\F_{3^2}$ of genus $3$. For $4 \le s \le 7$, consider $D=\sum_{b \in \F_{9}^*} P_{ab}$ the sum of all rational places of $\cF_1$ with $b \neq 0$ of degree 24. For $Q_1, Q_2, Q_3$ the zeros of $x^3+x$ in $\cF_1$ let 
			$$G_s = Q_1 + 2Q_2 + (24-3s)Q_\infty \, \text{ and } \, H_s = (s+2)Q_1 + (s+1)Q_2 + (s-1)Q_3 - Q_\infty.$$
			From Proposition \ref{corherlcp}, $\left( \CL(D, G_s), \, \CL(D, H_s) \right)$ is an LCP of AG codes over $\F_9$ for each $4 \le s \le 7$. So, for $ s=4, \dots, 7$, we get LCPs of codes over $\F_9$ with parameters
			\begin{align*}
				&\left([24, 13,  9], \ [24, 11,  11] \right),  &\left([24, 10,  12], \ [24, 14,  8]\right),\\
				&\left([24, 7, 15], \ [24, 17,  5] \right),  &\left([24, 4,  18], \ [24, 21, 3] \right). 
			\end{align*}
		The minimum distance of the codes was computed using the free online version of Magma Computational Algebra System, see \cite{Magma}.
			Furthermore, by Proposition \ref{PropHermitianlcd}  for $G=Q_1+2Q_2+4Q_3+11Q_\infty$, we have $\CL(D, G)$ is an LCD code over $\F_9$ with parameters $[24, 16, 6]$ and $\CL(D, G)^\perp=\CL(D,H)$ with $H=6Q_1 + 5Q_2+3Q_3-4Q_\infty$.
		\end{example}

	\subsection{LCD AG codes and LCPs of AG codes over a maximal curve covered by the Hermitian function field}
	
	Let  $\F_{q^2}$ be a finite field of characteristic $p$. Recently, in a very nice paper \cite{gatti2023galois},  Gatti and Korchmáros provided explicit equations and classified all Galois subcovers of the Hermitian curve $\cH$ given as quotient curves by subgroups of the automorphism of  $\cH$ of order $p^2$. In this subsection, we describe certain LCPs of codes and LCD codes on one of the curves described in \cite{gatti2023galois}. 
	
	Let $q=p^h$ be a power prime with $h \ge 3$. Let $\mathcal{F}_b$ be the function field of the curve
	\begin{equation}
		\label{curveGattiKorch}
		\cX_b : \sum_{i=1}^{h-1} (b-b^{p^i}) x^{p^{i-1}} + w y^{q+1} = 0
	\end{equation}
	for $b \in \F_q \backslash \F_p$, where $w \in \F_{q^2}$ is a fixed element such that $w^{q-1}=-1$. 
	
	The function field $\mathcal{F}_b$ has genus $g(\mathcal{F}_b)=\frac{1}{2}q(p^{h-2} - 1)$ and it is a maximal function field over $\F_{q^2}$, that is, it has $N(\mathcal{F}_b)= p^{3h-2}+1$ places of degree one. Denote $Q_\infty$ the only place at infinity of $\mathcal{F}_b$ and $Q_1, \dots, Q_{p^{h-2}}$ the (rational) zeros of $$f(x)=-\frac{1}{w} \sum_{i=1}^{h-1} (b-b^{p^i}) x^{p^{i-1}}$$ in $\mathcal{F}_b$.
	
	\begin{proposition}
		\label{LCPGattiKorch}
		For $s$ an integer such that $ q+1 \le s \le q^2 - \frac{q-p^2}{2} - 1$, there exists a pair  $\left( \CL(D,G) , \, \CL(D, H) \right)$ of LCP of codes with parameters
		\begin{align*}
			&[p^{h-2}(q^2-1), \, p^{h-2}(q^2-s-1)+1, \, \ge \frac{p^{h-2}(2s - q) + q}{2}], \text{ and }\\
			&[p^{h-2}(q^2-1), \, p^{h-2}s-1, \, p^{h-2}(q^2-s- 1) - \frac{q(p^{h-2}-1)}{2} + 2],
		\end{align*}
		respectively.
	\end{proposition}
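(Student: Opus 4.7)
The plan is to view $\mathcal{F}_b$ as a Kummer extension and invoke Theorem \ref{lcpkummer}. Rewriting Equation (\ref{curveGattiKorch}), $\mathcal{F}_b$ is the function field of $y^{q+1} = f(x)$, where
$$
f(x) := -\frac{1}{w}\sum_{i=1}^{h-1}(b - b^{p^i})\,x^{p^{i-1}} \in \F_{q^2}[x]
$$
is a separable polynomial of degree $p^{h-2}$ whose $p^{h-2}$ roots in $\F_{q^2}$ are exactly the $a$ underlying the totally ramified places $Q_1,\ldots,Q_{p^{h-2}}$. Since $\gcd(q+1,p^{h-2}) = 1$, the choice $m = q+1$, $r = p^{h-2}$ meets the structural hypotheses of Theorem \ref{lcpkummer}; it remains to exhibit a divisor $D$ supported on completely split rational places.

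The second step is a counting argument for the rational places of $\F_{q^2}(x)$ that split completely in $\mathcal{F}_b/\F_{q^2}(x)$. For $a \in \F_{q^2}$ with $f(a) \neq 0$, Kummer theory gives that $P_a$ splits completely iff $f(a) \in (\F_{q^2}^*)^{q+1}$, and this subgroup equals $\F_q^*$ since the norm $z \mapsto z^{q+1}$ from $\F_{q^2}^*$ to $\F_q^*$ is surjective. The $p^{3h-2}+1$ rational places of $\mathcal{F}_b$ decompose as $Q_\infty$, the $p^{h-2}$ ramified places $Q_i$, and $q+1$ places above each completely split $P_a$; a direct count then yields exactly $t := p^{h-2}(q-1)$ completely split rational places $P_{a_1},\ldots,P_{a_t}$. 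Setting
$$
D = \sum_{i=1}^{t} \sum_{P_{a_i b}\mid P_{a_i}} P_{a_i b},
$$
we obtain $\deg(D) = t(q+1) = p^{h-2}(q^2-1) =: n$.

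Finally, I would check that the prescribed range of $s$ falls within the hypothesis $m \le s < (n - g + 2)/r$ of Theorem \ref{lcpkummer}. The lower bound $s \ge q+1 = m$ is immediate. For the upper bound, using Equation (\ref{floor}) and $g = q(p^{h-2}-1)/2$ one computes
$$
\frac{n - g + 2}{r} = q^2 - 1 - \frac{q}{2} + \frac{q+4}{2\,p^{h-2}},
$$
and the hypothesis $s \le q^2 - (q - p^2)/2 - 1$ forces $s < (n-g+2)/r$ because, after cancellation, the inequality reduces to $p^h < q+4$, which holds trivially since $q = p^h$. Defining $G$ and $H$ as in Theorem \ref{lcpkummer} then gives the LCP pair; the dimensions follow from Proposition \ref{parameters} applied with $\deg(G) = g + n - rs$ and $\deg(H) = g + rs - 2$, and the stated minimum distance bounds follow from the Goppa bound $d \ge n - \deg(\cdot)$ after substituting $r = p^{h-2}$ and $g = q(p^{h-2}-1)/2$. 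The main obstacle is the counting argument producing $t = p^{h-2}(q-1)$; the rest is bookkeeping via Equation (\ref{floor}).
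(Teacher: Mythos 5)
Your proposal is correct and follows essentially the same route as the paper: identify $\mathcal{F}_b$ as a Kummer extension with $m=q+1$, $r=p^{h-2}$, take $D$ to be the sum of all rational places lying over the $p^{h-2}(q-1)$ completely split places of $\F_{q^2}(x)$ (the paper writes this same divisor as $(y^{q^2-1}-1)_0$ and gets its degree from maximality, exactly as your count does), and invoke Theorem \ref{lcpkummer}. Your explicit verification that $q^2-\frac{q-p^2}{2}-1 < (n-g+2)/r$ is a detail the paper leaves implicit, and it checks out.
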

	\begin{proof}
		Consider the divisor  $D = \sum_{\beta \in \fqs^*} P_{\alpha \beta}$, where $P_{\alpha \beta}$ is a rational place on $\cF_b$ with $\beta \neq 0$ and $\sum_{i=1}^{h-1} (b-b^{p^i}) \alpha^{p^{i-1}} =- w \beta^{q+1} \neq 0 $. Then $D=(y^{q^2-1}-1)_0$ and has degree $\deg(D) = p^{h-2}(q^2- 1)$. Now, considering the divisors
		\begin{align*}
			G  &= \sum_{i=1}^{p^{h-2}- 1} p^2 i Q_i + p^{h-2}(q^2- s - 1) Q_\infty, \text{ and } \\
			H  &= \sum_{i=1}^{p^{h-2}- 1}  (s + q - p^2 i ) Q_i + (s-1)Q_{p^{h-2}} - Q_\infty,
		\end{align*}	
		the result follows from Theorem \ref{lcpkummer}.
	\end{proof}
	\begin{proposition}
		\label{lcdGattiK}
		Let $q=p^h$ be a prime power, where $h \ge 3$ is an integer.
		Let $1 \leq  v \leq (q-1)/2$ be a divisor of $q^2-1$. Then, there exists a
		$$
		\left[\tfrac{p^{h-2}(q^2-1)}{v}, \, p^{h-2}(q+1) + \tfrac{q^2-1}{v} - q - 1, \, \ge (p^{h-2}-1)(\tfrac{q^2-1}{v} - q - 1) - \tfrac{1}{2}q(p^{h-2} - 1) + 1 \right]
		$$
		LCD code over $\F_{q^2}$.
	\end{proposition}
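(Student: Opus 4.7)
The plan is to apply Theorem \ref{lcpkummer2} directly, after recognizing $\cF_b$ as (a normalization of) a maximal Kummer extension of degree $q+1$ over $\F_{q^2}(x)$ with $r := p^{h-2}$ totally ramified finite places. The defining equation of $\cX_b$ rearranges to $y^{q+1} = f(x)$ with $f(x) := -w^{-1}\sum_{i=1}^{h-1}(b-b^{p^i})x^{p^{i-1}}$; by the paragraph preceding the proposition, $f(x)$ has degree $r$ and splits into $p^{h-2}$ distinct linear factors over $\F_{q^2}$, whose roots are the places $Q_1,\dots,Q_r$ appearing together with $Q_\infty$ as the totally ramified places of $\cF_b/\F_{q^2}(x)$.

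First I would verify the arithmetic hypotheses of Theorem \ref{lcpkummer2}. Since $h \ge 3$ we have $2 \le r = p^{h-2} < q+1$, and $\gcd(r, q+1) = 1$ because $r$ is a power of $p$ while $q+1 \equiv 1 \pmod p$. Setting $t := (q^2-1)/v$, the integer $t$ divides $q^2-1$ automatically, and the hypothesis $v \le (q-1)/2$ rearranges to $t \ge 2q+2$, the remaining condition imposed in Theorem \ref{lcpkummer2}.

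Next I would invoke Theorem \ref{lcpkummer2} with the divisors
$$D := (y^{t}-1)_0 \quad \text{and} \quad G := \sum_{i=1}^{r-1}\left\lfloor\tfrac{i(q+1)}{r}\right\rfloor Q_i + (t-q-1)Q_r + (r(q+1)-1)Q_\infty,$$
obtaining that $\CL(D,G)$ is an LCD code with parameters $[\,rt,\; r(q+1)+t-q-1,\; \ge (r-1)(t-q-1)-g+1\,]$. Substituting $r = p^{h-2}$, $t = (q^2-1)/v$, and $g(\cF_b) = \tfrac{1}{2}q(p^{h-2}-1)$ returns exactly the length, dimension, and minimum-distance bound claimed in the statement.

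The one point meriting a brief remark is that $f(x)$ need not be monic, so $\cF_b$ satisfies $y^{q+1} = c\prod_{i=1}^{r}(x-\alpha_i)$ with $c \in \F_{q^2}^\ast$ rather than the normalized form written in Theorem \ref{lcpkummer2}. This scalar is harmless for the argument: both the principal divisor $(y) = Q_1+\cdots+Q_r-rQ_\infty$ and the divisors $(x-\alpha_i) = (q+1)Q_i-(q+1)Q_\infty$ are unaffected by $c$, so every Riemann-Roch step used in Theorem \ref{lcpkummer2} transfers verbatim. I do not expect any genuine obstacle beyond this bookkeeping.
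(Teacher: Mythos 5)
Your proposal is correct and follows essentially the same route as the paper: apply Theorem \ref{lcpkummer2} with $r=p^{h-2}$, $t=(q^2-1)/v$, $D=(y^t-1)_0$, and the same divisor $G$ (note $\lfloor i(q+1)/p^{h-2}\rfloor = p^2 i$ for $1\le i\le p^{h-2}-1$, so your coefficients coincide with the paper's). Your extra checks of the hypotheses ($\gcd(p^{h-2},q+1)=1$, $t\ge 2q+2$, and the harmlessness of the non-monic leading coefficient of $f$) are sound and in fact slightly more careful than the published argument.
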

	\begin{proof}
		Consider $\cF_b/\mathbb{F}_{q^{2u}}$ the function field over $\F_{q^2}$ defined by the curve (\ref{curveGattiKorch}). Let $t= \frac{q^2-1}{v}$. Since $q \ge 2v + 1$, we have $t > 2q + 1$. Now, we define 
		$$
		D = (y^t - 1)_0 \quad \mbox{and} \quad G = \sum_{i=1}^{p^{h-2}- 1} p^2 i Q_i + (t-q-1)Q_{p^{h-2}} + (p^{h-2}(q+1)-1)Q_\infty .
		$$
		From Theorem \ref{lcpkummer2}, we obtain that $\CL(D,G)$ is an LCD code.
	\end{proof}
	\begin{example} For $p=3$ and $h=3$, consider $\cF_b/ \F_{q^2}$ the function field over $\F_{q^2}$ of the curve (\ref{curveGattiKorch}). We have $q^2 - 1 = 27^2 -1 = 2^3 \cdot 5 \cdot 13$. In Proposition \ref{lcdGattiK}, we can take $v=1, 2, 4, 7, 8, 10, 13$. 
		
		For instance, for $v=4$ we have $t=(q^2-1)/4 = 182$ so we obtain a $[546, 238, \ge 282]$-LCD code over $\F_{27^2}$, namely $\CL(D,G)$ with 
		$$
		D=(y^{182} - 1)_0 \quad \mbox{and} \quad G=9Q_1 + 18 Q_2 + 154Q_3 + 83Q_\infty.
		$$
		
		For $v=7$, we have $t=8 \cdot 13=104$. So we obtain a $[312, 160, \ge 126]$-LCD code $\CL(D,G)$ over $\F_{27^2}$, where 
		$$
		D = (y^{104}- 1)_0 \quad \mbox{and} \quad G = 9Q_1 + 18Q_2 + 76Q_3 + 83 Q_\infty.
		$$
		Analogously, for $v=1,2,8, 13$, we get LCD codes with parameters 
		$$[2184, 784, \ge 1374], \ [1092, 420, \ge 646], \ [273, 147, \ge 100] \ \mbox{ and }  \ [168, 112, \ge 30],$$ respectively.
		
		For $p=2$ and $h=3$, we have $q^2 - 1 = 8^2 -1 = 3^2 \cdot 7$. In Proposition \ref{lcdGattiK}, choosing $v=3$ we have $t=(q^2-1)/3=21$ and obtain a $[42,30,\geq 9]$-LCD code over $\F_{64}$. And for $v=1$, we obtain a $[126,72,\geq 51]$-LCD code over $\F_{64}$, that is a code with the best-known parameters according \cite{MintTables}.
	\end{example}

		\begin{example}
			Let $p=2$, $h=3$ and $\zeta$ be a primitive $(2^6-1)$-th root of unity and $\F_{64}=\F_2(\zeta)$. Consider $b=\zeta^{q+1}$ and $w=\zeta^q+\zeta$ and  $\cF_b/ \F_{q^2}$ the function field over $\F_{q^2}$ of the curve (\ref{curveGattiKorch}). Let $D=(y^{63}-1)_0$ and $n=\deg(D) =2 \cdot 63 = 126$. For $9 \le s \le 61$, consider
			$$
			G_s =4Q_1+(126-2s)Q_\infty \quad \mbox{and} \quad H_s = (s+4)Q_1+(s-1)Q_2-Q_\infty.
			$$
			By Proposition \ref{LCPGattiKorch},
			$\left( \CL(D,G_s), \CL(D, H_s) \right)$ is an LCP of AG codes over $\F_{64}$ with parameters $[126, 127 - 2s, \ge 2(s-2)]$ and $[126, 2s - 1, \ge 124-2s]$, respectively. Thus, we can construct LCPs of codes over $\F_{64}$ so that the dimension  $k_s$ of $\CL(D,G_s)$ varies between all odd numbers in $5 \le k_s \le 109$. 
		\end{example}

	\section{LCPs of AG codes and LCD codes over the curve \texorpdfstring{$y^{q+1}=x^2+x$, $q$ odd}{yq+1=x2+x, q odd}}
	\label{SectionHyper}
	
	For $P$ a rational place in a function field $\cF/\fq$ of genus $g$, the {\it Weierstrass semigroup} at $P$ is
	$$H(P) = \{ n \in \N \mid \exists \,\, z \in \cF, \,\, (z)_\infty =nP\}.$$
	The elements in the semigroup are closely related to the dimension of certain Riemann-Roch spaces; in fact,  it is easy to notice that   $n \in H(P)$ if and only if $\ell((n-1)P)=\ell(nP)$. In particular, $0 \in H(P)$, and any integer larger than $2g-1$, is in the semigroup by the Riemann-Roch Theorem.
	
	This section considers a particular type of function field for which a rational place $P$ exists with Weierstrass semigroup $H(P)=\{0, g+1, g+2, \dots\}$. This is a  characteristic of hyperelliptic curves. In fact, in 1939, it was proved that for a rational place $P$ of a hyperelliptic function field that is not fixed by the hyperelliptic involution, the Weierstrass semigroup $H(P)=\{0, g+1, g+2, \dots\}$, see \cite[Satz 8]{S1939}. 
	
	Consider the function field $\cF = \F_{q^2}(x,y)/\fqs$ of the hyperelliptic curve defined by 
	\begin{equation}\label{hyp}
		\cX \,:\,y^{q+1}=x^2+x.
	\end{equation}
	
	If $q$ is even, this Kummer extension has a unique place at infinity $Q_\infty$ and was already considered in the previous section. Now, we focus on the scenario where $q$ is odd. The curve has genus $g=(q-1)/2$ in this case. 
	In \cite[Proposition 4.11]{TT2014}, Tafazolian and Torres  proved $\cX$ is a maximal curve over $\fqs$. The  $2q^2-q+1$ rational places can be characterized in the following way: 
	\begin{enumerate}
		\item  two places totally ramified, the roots of $x^2+x$ in $\cF$ denoted by $\{Q_0, Q_1\}$,
		\item two places at infinity $\{ Q_\infty, Q_\infty^{'} \}$, and
		\item $(2q-3)(q+1)$ rational places from $2q-3$ elements $\alpha \in \F_{q^2}^*$ such that $\alpha^2+\alpha$ is a $(q+1)$-power in $\F_{q^2}^*$. Observe that $0\neq \a^2+\a$ is a $(q+1)$-power in $\F_{q^2}^*$ if and only if $(\a^2+\a)^{q-1}=1$ if and only if $\a^{2q}+\a^q - \a^2 - \a = 0$ or, equivalently, if $\a$ is a root of $(x^q-x)(x^q+x+1)=0$. Since $x^2+x$ is a factor of $x^q-x$ and $2x+1$ is a factor of both $x^q-x$ and $x^q+x+1$, we obtain that the rational places $P_{\alpha \beta}$ in $\cF$ with $\beta \neq 0$ arise from the $2q-3$ elements $\a \in \F_{q^2}^*$ such that $\a$ is a root of 
		\begin{equation}
			\label{equationHyp}
			\frac{(x^{q-2}- x^{q-3}+ \cdots-x+1)(x^q+x+1)}{2x+1}.
		\end{equation}
		
	\end{enumerate}

	Consider the function $h :=\frac{ y^{q^2-1} - 1 }{ 2x + 1}$ in $\cF$. Defining $D$ as the zero divisor of $h$ we have 
	\begin{equation}\label{functionh}
		D := (h)_0= \sum_{b \in \F_{q^2}^*} P_{ab},
	\end{equation}  
	where the sum runs over all $a,b \in \F_{q^2}$ with $a^2+a=b^{q+1} \neq 0$. We also have  $\deg D = (2q-3)(q+1)$. 
		
	Now we construct non-special divisors of degree $g$ and $g-1$ on $\cF$.	
	
	\begin{lemma}
		\label{lemmanonspecyq+1=x2+x}
		Let $q$ be odd, and $\cF$ be the function field defined by Equation (\ref{hyp}). Let $P \in \{ P_{ab} \in \PP_\cF \mid 2a+1 \neq 0 \}$
		be a rational place of $\cF$. Then, $gP$ is a non-special divisor of degree $g$. In particular, $gP-P' \in \Div(\cF)$ is a non-special divisor of degree $g-1$ for all rational places $P' \in \PP_\cF$ distinct from $P$. 
	\end{lemma}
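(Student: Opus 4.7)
The plan is to exploit the hyperelliptic structure of $\cF$ indicated by the preamble to this section. First I would make the change of variable $u = 2x+1$, which turns the defining equation $y^{q+1} = x^2+x$ into $u^2 = 4y^{q+1}+1$. Since $q$ is odd, $4y^{q+1}+1 \in \fqs[y]$ is separable of degree $q+1$, so $\cF = \fqs(y,u)$ appears as a hyperelliptic function field of genus $g=(q-1)/2$, with hyperelliptic involution $\sigma$ being the nontrivial element of $\Gal(\cF/\fqs(y))$ acting by $u \mapsto -u$ (equivalently $x \mapsto -x-1$, $y \mapsto y$). The fixed places of $\sigma$ are exactly those with $u=0$, i.e. the $P_{ab}$ with $2a+1=0$; hence any rational place $P=P_{ab}$ with $2a+1\neq 0$ is not fixed by $\sigma$.

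Next, I would invoke the classical theorem of Schmidt cited in the opening paragraph of Section \ref{SectionHyper}: for a rational place $P$ of a hyperelliptic function field not fixed by the hyperelliptic involution, the Weierstrass semigroup is $H(P) = \{0, g+1, g+2, \dots\}$. In particular $g$ is a gap at $P$, so $\ell(gP) = \ell((g-1)P) = \cdots = \ell(0) = 1$, meaning $\LL(gP)$ consists only of constants. Applying the Riemann--Roch theorem to $gP$ gives
\[
\ell(gP) = \deg(gP) + 1 - g + i(gP) = 1 + i(gP),
\]
which combined with $\ell(gP)=1$ forces $i(gP)=0$. Hence $gP$ is non-special of degree $g$. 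The elliptic edge case $q=3$ (so $g=1$) is handled directly: on a genus-$1$ function field any rational place $P$ satisfies $\ell(P)=1$ by Riemann--Roch, since the canonical class has negative degree after subtracting $P$.

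For the second assertion, let $P'$ be any rational place of $\cF$ distinct from $P$. The inclusion $\LL(gP-P') \subseteq \LL(gP) = \fqs$ together with the fact that no nonzero constant vanishes at $P'$ yields $\LL(gP-P') = \{0\}$, so $\ell(gP-P') = 0$. Since $\deg(gP-P') = g-1$, this exhibits $gP-P'$ as a non-special divisor of degree $g-1$, as required. The main (and essentially only) non-routine step is correctly identifying the hyperelliptic structure of $\cF$ and pinpointing the places fixed by $\sigma$; after that, Schmidt's theorem carries all of the weight and the passage to $gP-P'$ is immediate.
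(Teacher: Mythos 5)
Your proof is correct and follows essentially the same route as the paper: identify the hyperelliptic involution $(x,y)\mapsto(-x-1,y)$, observe its fixed places are those with $2a+1=0$, and apply Schmidt's theorem to get $H(P)=\{0,g+1,g+2,\dots\}$, hence $i(gP)=0$. The only (harmless) differences are that you verify the hyperelliptic structure explicitly via $u=2x+1$, treat the $g=1$ case separately, and deduce non-specialness of $gP-P'$ directly from $\LL(gP)=\F_{q^2}$ rather than citing \cite[Lemma 3]{BL2006} as the paper does.
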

	\begin{proof}
		The hyperelliptic involution of $y^{q+1}=x^2+x$ is given by $\varphi(x,y)=(-x-1, y)$, so the fixed points of $\varphi$ are those $(a,b) \in \F_{q^2} \times \F_{q^2}$ such that $2a+1=0$. Hence for $P \in \{ P_{ab} \in \PP_\cF \mid 2a+1 \neq 0 \}$, we have $H(P) = \{0, g+1, g+2, ... \}$. Therefore $\LL(gP)=\{0\}$ and $gP$ is a non-special divisor of degree $g$.  Finally, given a rational place $P' \in \PP_\cF \setminus \{ P \}$, we have that $gP- P'$ is also a non-special divisor by \cite[Lemma 3]{BL2006}. 
	\end{proof}

	\begin{theorem}
		\label{lcphyper}
		Let $\cF=\F_{q^2}(x, y)$ be the function field defined by $y^{q+1}=x^2+x$ over $\F_{q^2}$. Let $Q_\infty$ and $Q_\infty'$ be the two places at infinity, and $Q_0$, $Q_1$ be the two totally ramified places in $\cF/\F_{q^2}(x)$. Let $D := (h)_0$ as in (\ref{functionh}), and $n := \deg(D) = 2q^2-q-3$. Suppose that $G, H \in \Div(\cF)$ are two divisors satisfying 
		\begin{enumerate}[i)]
			\item  $2g-2 < \deg(G), \, \deg(H) < n$;
			\item $\gcd(G, H) = g Q_0 - Q_\infty$;
			\item $\lmd(G, H) = g Q_0 + q Q_1 + (q^2-q-2)( Q_\infty + Q_\infty')$.
		\end{enumerate}
		Then the pair $\left( \C_\LL(D, G), \C_\LL(D,H) \right)$ is an LCP of AG codes.
	\end{theorem}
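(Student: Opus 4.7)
The plan is to reduce the claim to the hypotheses of Theorem \ref{thmlcp3.5} applied to $(G, H, D)$. Assumption (i) of our statement is precisely hypothesis (i) of that theorem, and the complementary condition $\ell(G) + \ell(H) = n$ follows by Riemann--Roch (since $G$ and $H$ are non-special) together with the identity $\deg(G) + \deg(H) = \deg(\gcd(G,H)) + \deg(\lmd(G,H))$; direct substitution from (ii) and (iii) yields $\deg(G) + \deg(H) = n + 2g - 2$, as required. Hypothesis (ii) is immediate since $\deg(gQ_0 - Q_\infty) = g - 1$. The problem thus reduces to showing that both $\gcd(G,H)$ and $\lmd(G,H) - D$ are non-special.

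The first is essentially free: Lemma \ref{lemmanonspecyq+1=x2+x} applied with $P = Q_0 = P_{00}$ (which satisfies $2 \cdot 0 + 1 \neq 0$, so it is not fixed by the hyperelliptic involution) and $P' = Q_\infty \neq Q_0$ shows that $\gcd(G,H) = gQ_0 - Q_\infty$ is non-special.

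The main obstacle is the non-specialness of $\lmd(G,H) - D$. My plan is to compute the principal divisor of the function $h = (y^{q^2-1}-1)/(2x+1)$ that defines $D$. From $y^{q+1} = x(x+1)$ and the total ramification of $Q_0, Q_1$, one obtains $(y) = Q_0 + Q_1 - Q_\infty - Q_\infty'$ and $(2x+1) = D_{-1/2} - \tfrac{q+1}{2}(Q_\infty + Q_\infty')$, where $D_{-1/2}$ is the sum of the $q+1$ rational places lying above $x = -1/2$ (these split completely since $-1/4 \in \F_q^*$ is a $(q+1)$-th power in $\F_{q^2}^*$). The delicate point is that $-1/2$ is the fixed point of the hyperelliptic involution: a local calculation using $y^{q+1} - b^{q+1} = (x+1/2)^2$ at each such place $P$ gives $v_P(y-b) = 2$, so the places of $D_{-1/2}$ contribute with multiplicity $2$ to $(y^{q^2-1}-1)_0$, exactly cancelling the simple pole of $1/(2x+1)$ there. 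Balancing multiplicities should then yield $(h) = D - \tfrac{n}{2}(Q_\infty + Q_\infty')$, and hence the linear equivalence $D \sim \tfrac{n}{2}(Q_\infty + Q_\infty')$.

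Substituting this equivalence into $\lmd(G,H) - D$ and absorbing the resulting $Q_\infty + Q_\infty'$ contribution with the principal divisor $(y^{(q+1)/2}) = \tfrac{q+1}{2}(Q_0 + Q_1 - Q_\infty - Q_\infty')$, the coefficients should simplify (using $g = (q-1)/2$) to $\lmd(G,H) - D \sim gQ_1 - Q_0$. Since $Q_1 = P_{-1, 0}$ satisfies $2(-1) + 1 = -1 \neq 0$ and $Q_0 \neq Q_1$, a second application of Lemma \ref{lemmanonspecyq+1=x2+x} delivers the desired non-specialness, and Theorem \ref{thmlcp3.5} then yields that $(\CL(D,G), \CL(D,H))$ is an LCP of AG codes.
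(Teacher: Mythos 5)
Your proposal is correct and follows essentially the same route as the paper's proof: reduce to Theorem \ref{thmlcp3.5}, get $\ell(G)+\ell(H)=n$ from Riemann--Roch and the degree identity $\deg(G)+\deg(H)=\deg(\gcd(G,H))+\deg(\lmd(G,H))$, handle $\gcd(G,H)=gQ_0-Q_\infty$ via Lemma \ref{lemmanonspecyq+1=x2+x}, and use $(h)=D-\tfrac{n}{2}(Q_\infty+Q_\infty')$ together with $(y^{(q+1)/2})$ to show $\lmd(G,H)-D\sim gQ_1-Q_0$. Your local analysis at the places above $x=-1/2$ correctly fills in the computation the paper summarizes as ``after some computations.''
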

	\begin{proof}
		We shall show that $D$, $G$ and $H$ satisfy the conditions of Theorem \ref{thmlcp3.5}.
		By condition ii) and iii), we have $\deg(\gcd(G,H))=g-1$ and $\deg(\lmd(G,H))=2q^2-q+g-4$. From Riemann-Roch Theorem and Equation (\ref{basicP}),
		\begin{align*}
			\ell(G) + \ell(H) 
			& = \deg(G+H) + 2 - 2 g \\
			& = \deg( \gcd(G,H) + \lmd(G,H) ) + 2 - 2g \\
			& = 2q^2-q-3=n.
		\end{align*}
		Lemma \ref{lemmanonspecyq+1=x2+x} yields that $\gcd(G,H)=g Q_0 - Q_\infty$ is a non-special divisor of degree $g-1$. We are left to show that $\lmd(G,H)-D$ is also a non-special divisor. We have $D$ is the zero divisor of the function $h=\frac{y^{q^2-1}-1}{2x+1}$, and the principal divisor of $h$ is 
		$(h) = D - \frac{2q^2-q-3}{2}(Q_\infty+Q_\infty')$. Furthermore, we have the divisor $(y)=Q_0+Q_1-Q_\infty-Q_\infty'$ and, after some computations, we conclude 
		$$\lmd(G,H) - D \sim - Q_0 + g Q_1,$$
		which is a non-special divisor of degree $g-1$ by Lemma \ref{lemmanonspecyq+1=x2+x}. Therefore, by Theorem \ref{thmlcp3.5}, the pair $\left( \C_\LL(D,G), \,  \C_\LL(D, H) \right)$ is an LCP of AG codes.
	\end{proof}
	\begin{corollary}
		\label{CorHyperLCP1}
		Let $\cF$ and $D$ be as in Theorem \ref{lcphyper}. Consider the divisors
		$$
		G = gQ_0 + qQ_1 + (q^2-q-2)Q_\infty
		$$
		and
		$$
		H = gQ_0 - Q_\infty + (q^2-q-2)Q_\infty'.
		$$
		Then $(\CL(D,G), \CL(D,H))$ is an LCP of AG codes with parameters 
		\begin{align*}
			&[2q^2-q-3, q^2-1, \ge q^2-q-1 - \tfrac{q-1}{2}]  \mbox{ and }\\
			&[2q^2-q-3, q^2-q-2,  \ge  q^2-\tfrac{q-1}{2}],
		\end{align*}
		respectively.
	\end{corollary}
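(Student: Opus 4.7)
The plan is a direct application of Theorem \ref{lcphyper} to the pair $(G,H)$ given in the statement, followed by routine parameter extraction via Proposition \ref{parameters}. The corollary is essentially bookkeeping: all the structural work (non-speciality of $\gcd(G,H)$ and of $\lmd(G,H)-D$) has already been packaged inside Theorem \ref{lcphyper}, so I only need to check its three hypotheses for this concrete choice and then read off dimensions and the Goppa distance bound.

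First I would verify hypotheses (ii) and (iii) of Theorem \ref{lcphyper}. Since both $G$ and $H$ are supported only on the four places $\{Q_0, Q_1, Q_\infty, Q_\infty'\}$, the divisors $\gcd(G,H)$ and $\lmd(G,H)$ are obtained by a coefficient-wise $\min$ and $\max$ at each of these four places. A direct tabulation gives $\gcd(G,H) = gQ_0 - Q_\infty$ and $\lmd(G,H) = gQ_0 + qQ_1 + (q^2-q-2)(Q_\infty + Q_\infty')$, matching conditions (ii) and (iii) exactly. For hypothesis (i), using $g=(q-1)/2$ and $n = 2q^2-q-3$, a short calculation gives $\deg(G) = q^2 + g - 2$ and $\deg(H) = q^2 - q + g - 3$, both lying strictly between $2g-2 = q-3$ and $n$ for odd $q \ge 3$. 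Theorem \ref{lcphyper} then yields that $(\CL(D,G), \CL(D,H))$ is an LCP of AG codes.

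Second, since $2g-2 < \deg(G), \deg(H) < n$, Proposition \ref{parameters} gives
\[
\dim \CL(D,G) = \deg(G) + 1 - g = q^2 - 1, \qquad \dim \CL(D,H) = \deg(H) + 1 - g = q^2 - q - 2,
\]
and the Goppa bound yields
\[
d(\CL(D,G)) \ge n - \deg(G) = q^2 - q - 1 - \tfrac{q-1}{2}, \qquad d(\CL(D,H)) \ge n - \deg(H) = q^2 - \tfrac{q-1}{2},
\]
which are precisely the claimed parameters.

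The main (and essentially only) pitfall is getting the $\min$/$\max$ correct at $Q_\infty$, where $G$ contributes $q^2-q-2$ while $H$ contributes $-1$, giving $\min = -1$ and $\max = q^2-q-2$, and analogously at $Q_\infty'$ where one of the coefficients is $0$. Once this small tabulation is done, the corollary is immediate from Theorem \ref{lcphyper} and Proposition \ref{parameters}.
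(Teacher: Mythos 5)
Your proof is correct and matches the paper's (implicit) argument: the corollary is stated without a separate proof precisely because it is the direct instantiation of Theorem \ref{lcphyper} plus Proposition \ref{parameters} that you carry out, and your $\gcd$/$\lmd$ tabulation and the degree, dimension and Goppa-bound computations all check out. The one point you do not address is that the statement gives the exact minimum distance $q^2-\tfrac{q-1}{2}$ for $\CL(D,H)$ (with no ``$\ge$''), whereas your argument only yields the designed-distance lower bound; establishing equality would require exhibiting a codeword of that weight, though this is most plausibly a typographical omission of ``$\ge$'' in the corollary.
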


	We finish this section giving conditions to an LCP of AG codes $\left( \C_\LL(D, G), \, \C_\LL(D,H) \right)$  to be such that $\C_\LL(D, G)^\perp = \C_\LL(D, H)$, i.e., $\C_\LL(D, G)$ is an LCD code.
	
	\begin{lemma}
		\label{diffh}
		Let $h = \frac{ y^{q^2-1} - 1 }{ 2x + 1 } $ a rational function in $\cF$. Then we have
		$$
		\left( \frac{dh}{h} \right) = -D + q(Q_0+Q_1) + (q^2-q-3)(Q_\infty + Q_\infty') + \left( y^{q^2-q -2} +\frac{2h}{2x+1} \right).
		$$
	\end{lemma}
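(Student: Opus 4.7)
The plan is to write the differential $dh/h$ as $(dh/dx)/h \cdot dx$, using the separating element $x$, and then apply the standard formula $(\omega) = (f) + (dx)$ for $\omega = f\,dx$. This yields
\[
\left(\frac{dh}{h}\right) = (dh/dx) - (h) + (dx).
\]
The principal divisor $(h) = D - \frac{2q^2-q-3}{2}(Q_\infty + Q_\infty')$ is already recorded in the proof of Theorem~\ref{lcphyper}, so the two remaining pieces are $(dx)$ and an explicit expression for $dh/dx$ as an element of $\cF$.

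For $(dx)$ I would use Equation~(\ref{divisordif}) with the ramification data of the Kummer cover $\cF/\F_{q^2}(x)$: the places $Q_0,Q_1$ are totally ramified with $e=q+1$, while the pole of $x$ splits into $Q_\infty$ and $Q_\infty'$, each with $e=(q+1)/2$ (because $\deg(x^2+x)=2$ and $\gcd(q+1,2)=2$). Since $\gcd(q+1,p)=1$, all ramification is tame, so each different exponent equals $e-1$ and
\[
\Diff(\cF/\F_{q^2}(x)) = q(Q_0+Q_1) + \tfrac{q-1}{2}(Q_\infty+Q_\infty').
\]
Combining this with $(x)_\infty = \tfrac{q+1}{2}(Q_\infty+Q_\infty')$ gives $(dx) = q(Q_0+Q_1) - \tfrac{q+3}{2}(Q_\infty+Q_\infty')$.

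The computation of $dh/dx$ is where the characteristic-$p$ magic happens. Since $q=p^m$ we have $q+1 \equiv 1$ and $q^2-1 \equiv -1 \pmod p$. Implicit differentiation of $y^{q+1}=x^2+x$ yields $y^q\,dy=(2x+1)\,dx$, hence $dy/dx = (2x+1)/y^q$. Applying the quotient rule to $h = (y^{q^2-1}-1)/(2x+1)$ and simplifying, the factor $(q^2-1)y^{q^2-2}\cdot (2x+1)/y^q$ collapses to $-y^{q^2-q-2}(2x+1)$, producing
\[
\frac{dh}{dx} = -\Bigl(y^{q^2-q-2} + \frac{2h}{2x+1}\Bigr).
\]
Since the divisor of $-f$ equals the divisor of $f$, substituting this together with the value of $(dx)$ and $-(h)$ into the displayed formula recovers exactly the claimed expression.

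I expect no conceptual obstacle; the work is bookkeeping. The two places to be careful are (a) correctly assembling the contribution at the infinite places (each tame with $e=(q+1)/2$, contributing $\tfrac{q-1}{2}$ to the different and combining with $-(q+1)$ coming from $-2(x)_\infty$ to give $-\tfrac{q+3}{2}$), and (b) executing the mod-$p$ simplifications in the quotient rule carefully so that the large exponent $q^2-1$ collapses into the tidy $-y^{q^2-q-2}(2x+1)$ factor.
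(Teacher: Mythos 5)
Your proposal is correct and follows essentially the same route as the paper: implicit differentiation of $y^{q+1}=x^2+x$ (using $q+1\equiv 1$ and $q^2-1\equiv -1 \bmod p$) to get $dh = -\bigl( y^{q^2-q-2}+\tfrac{2h}{2x+1}\bigr)dx$, combined with $(h)$ and $(dx)=q(Q_0+Q_1)-\tfrac{q+3}{2}(Q_\infty+Q_\infty')$. The only cosmetic difference is that you spell out the tame ramification data behind $(dx)$ explicitly, which the paper leaves implicit; all your intermediate values check out.
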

	\begin{proof}
		Since $y^{q+1}=x^2+x$, the product rule yields $y^q dy = (2x +1)dx$, that is, $dy= \frac{2x+1}{y^q} dx$. Hence, we obtain that    
		\begin{align*}
			dh & = d\left( \frac{ y^{q^2-1} - 1 }{ 2x + 1 }  \right)\\
			& =\frac{ -y^{q^2-2}(2x+1) dy - 2(y^{q^2-1}-1)dx }{ (2x + 1)^2 } \\
			& =  \frac{-y^{q^2-q-2}(2x+1)^2 - 2(y^{q^2-1}-1)}{(2x+1)^2} dx \\
			& = - \left( y^{q^2-q -2}+ \frac{2h}{2x+1} \right) dx.
		\end{align*}
		This yields,
		\begin{align*}
			\left(\frac{dh}{h} \right) & = -(h) +  \left( y^{q^2-q -2} + \frac{2h}{2x+1} \right)  + (dx) \\
			& = -D + \frac{2q^2-q-3}{2} (Q_\infty + Q_\infty') + \left( y^{q^2-q -2} + \frac{2h}{2x+1} \right) \\
			& \hspace*{6cm} + q(Q_0 + Q_1) - \frac{q+3}{2}(Q_\infty + Q_\infty') \\
			& = - D + q(Q_0 + Q_1) + (q^2-q-3)(Q_\infty + Q_\infty') + \left( y^{q^2-q -2} + \frac{2h}{2x+1} \right).
		\end{align*}
	\end{proof}

	\begin{theorem}
		\label{lcdx2+x}
		Let $\cF=\F_{q^2}(x,y)$ be the function field over $\F_{q^2}$ defined by $y^{q+1}=x^2+x$. Consider $D=(h)_0$.
		If $G, H \in \Div(\cF)$ are divisors of $\cF$ such that 
		\begin{enumerate}[i)]
			\item $2g- 2 < \deg(G), \, \deg(H) < \deg(D)$;
			\item $\gcd(G,H) = gQ_0 - Q_\infty$; and
			\item $G+H = q(Q_0+Q_1)+(q^2-q-3)(Q_\infty + Q_\infty')$.
		\end{enumerate}
		Then the pair $\left( \C_\LL(D, G), \C_\LL(D,H) \right)$ is an LCP of AG codes.	Moreover, the code $\CL(D,G)$ is equivalent to an LCD code.
	\end{theorem}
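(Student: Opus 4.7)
The plan has three parts: first establish LCP via Theorem \ref{thmlcp3.5}; then use Lemma \ref{diffh} together with assumption (iii) to show that $\CL(D,G)^\perp$ and $\CL(D,H)$ are equivalent codes; and finally pass from that equivalence to an actual LCD code by extracting a square root in $\F_{q^2}^*$.

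For the LCP, assumption (iii) yields $\deg(G+H)=2q^2-6=n+2g-2$, and then Riemann--Roch (using (i) to ensure $i(G)=i(H)=0$) gives $\ell(G)+\ell(H)=n$. Assumption (ii) forces $\deg\gcd(G,H)=g-1$, and Lemma \ref{lemmanonspecyq+1=x2+x} applied at $Q_0$ (where $2(0)+1\neq 0$) gives the non-speciality of $\gcd(G,H)$. For $\lmd(G,H)-D$, I would compute $\lmd(G,H)=G+H-\gcd(G,H)=\tfrac{q+1}{2}Q_0+qQ_1+(q^2-q-2)Q_\infty+(q^2-q-3)Q_\infty'$, reduce modulo the principal divisors of $h$ and $y$ (using $(h)=D-\tfrac{n}{2}(Q_\infty+Q_\infty')$ and $(y)=Q_0+Q_1-Q_\infty-Q_\infty'$), and obtain $\lmd(G,H)-D\sim gQ_1-Q_\infty'$, which is non-special by Lemma \ref{lemmanonspecyq+1=x2+x} applied at $Q_1$. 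Theorem \ref{thmlcp3.5} then delivers the LCP property.

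Next, I take $W=(dh/h)$, which has simple poles with residue $1$ on $\Supp(D)$: at the generic places $P_{ab}$ with $a\neq -1/2$ this is routine, while at the places $P_{-1/2,b}$ one needs the local expansion $h\sim 2v/(q+1)$ with $v=x+\tfrac{1}{2}$ (obtained from $v_P(y-b)=2$ and $v_P(2x+1)=1$) to see that $v_P(h)=1$ and the residue of $dh/h$ is $1$ there as well. Proposition \ref{dualAGcode} then gives $\CL(D,G)^\perp=\CL(D,D-G+W)$. Substituting Lemma \ref{diffh} together with assumption (iii) shows $D-G+W=H+(\phi)$ with $\phi=y^{q^2-q-2}+2h/(2x+1)$; since $\phi$ has zero valuation on all of $\Supp(D)$, Proposition \ref{EquivProp} yields $\CL(D,G)^\perp=\vec{\phi}^{-1}\cdot\CL(D,H)$.

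For the LCD equivalence, let $\tilde{\C}=\vec{u}\cdot\CL(D,G)$ for some $\vec{u}\in(\F_{q^2}^*)^n$. A direct calculation gives $\tilde{\C}\cap\tilde{\C}^\perp=\vec{u}\cdot\bigl(\CL(D,G)\cap\vec{u}^{-2}\vec{\phi}^{-1}\cdot\CL(D,H)\bigr)$, so choosing $u_i^2=1/\phi(P_i)$ reduces the LCD condition to $\CL(D,G)\cap\CL(D,H)=0$, which is just LCP. The hard part, and the real heart of the argument, is verifying that $\phi(P_i)$ is a square in $\F_{q^2}^*$ at every $P_i\in\Supp(D)$. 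For $P_{ab}$ with $a\neq -1/2$ a direct evaluation gives $\phi(P_{ab})=b^{-(q+1)}$, a square because $q+1$ is even. The delicate case is $P_{-1/2,b}$: the quadratic vanishing of $y-b$ there forces a careful local computation yielding $\phi(P_{-1/2,b})=-4+2/(q+1)=-2(2q+1)/(q+1)\in\F_q^*$; and because $q$ is odd, every element of $\F_q^*$ equals some norm $c^{q+1}=(c^{(q+1)/2})^2$ and is therefore already a square in $\F_{q^2}^*$. Selecting the corresponding $\vec{u}$ finishes the proof.
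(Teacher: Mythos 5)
Your proposal is correct and follows essentially the same route as the paper: LCP via Theorem \ref{thmlcp3.5} with $\gcd(G,H)=gQ_0-Q_\infty$ and $\lmd(G,H)-D\sim gQ_1-Q_\infty'$ handled by Lemma \ref{lemmanonspecyq+1=x2+x}, then Lemma \ref{diffh} to identify $\CL(D,G)^\perp$ with a twist of $\CL(D,H)$ by $z=y^{q^2-q-2}+\tfrac{2h}{2x+1}$, and finally the observation that $z$ takes values in $\F_q^*$ (hence squares in $\F_{q^2}^*$) on $\Supp(D)$. Your local expansion at the places over $x=-1/2$ (the paper instead writes $h/(2x+1)$ as a polynomial $\tilde h(x)\in\F_q[x]$) and your explicit vector $\vec{u}$ with $u_i^2=1/z(P_i)$ are only cosmetic variants of the paper's argument, and if anything your final twisting step is stated more carefully.
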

	\begin{proof}
		Suppose $G, H \in \Div(\cF)$ are divisors of $\cF$ satisfying $(i)$, $(ii)$ and $(iii)$. By Proposition \ref{dualAGcode} and Lemma \ref{diffh}, we have $\CL(D,G)^\perp = \CL(D, H')$ where
		$$
		H' := D-G+\left( \frac{dh}{h} \right)=-G + q(Q_0 + Q_1) + (q^2-q-3)(Q_\infty + Q_\infty') + \left( y^{q^2-q -2} + \frac{2h}{2x+1} \right).
		$$
		Let $z = y^{q^2-q -2} + \frac{2h}{2x+1} \in \cF$. Note that $z(P_{ab}) \neq 0$ for all $P_{ab} \in \Supp(D)$.
		
		By assumption $(iii)$, $H = H' - \left( z \right)$.   
		Since $\gcd(G,H) = gQ_0 - Q_\infty$, we have 
		\begin{align*}
			\lmd(G,H) - D & = G+H-\gcd(G,H) - D  \\
			& = \tfrac{q+1}{2} Q_0 + qQ_1 + (q^2-q-2)Q_\infty + (q^2-q-3)Q_\infty' - (h)_0 \\
			& \sim \tfrac{q+1}{2} Q_0 + qQ_1 + (q^2-q-2)Q_\infty + (q^2-q-3)Q_\infty' \\
			& \hspace*{6.2cm}- \tfrac{2q^2-q-3}{2}(Q_\infty + Q_\infty') \\
			& = \tfrac{q+1}{2} Q_0 + qQ_1 - \tfrac{q+1}{2}Q_\infty -  \tfrac{q+3}{2}Q_\infty' \\
			& = \tfrac{q-1}{2} Q_1 - Q_\infty' + ( y^{(q+1)/2}) \\
			& \sim gQ_1 - Q_\infty',
		\end{align*}
		that is, $\lmd(G,H)-D$ is equivalent to $gQ_1 - Q_\infty'$.
		Hence $\gcd(G,H) = gQ_0 - Q_\infty$ and $\lmd(G,H)-D$ are non-special divisors of degree $g-1$, by Lemma \ref{lemmanonspecyq+1=x2+x}. Therefore,
		the pair $\left( \C_\LL(D,G), \,  \C_\LL(D, H) \right)$ is an LCP of AG codes. 
		
		Now we show $\CL(D,G)$ is equivalent to an LCD code. By Equation (\ref{equationHyp}), we have 
		$$ h = \frac{ y^{q^2-1} - 1 }{ 2x + 1 }=\frac{(x^{q-2}- x^{q-3}+x^{q-4}- \cdots+x-1)(x^q+x+1)}{2x+1} \quad \mbox{in } \cF, $$
		and we get 
		$$  \frac{h}{2x+1} =  \tilde{h}(x)  \in \fq[x], $$
		For $P_{ab} \in \Supp(D)$ and $2a+1 \neq 0$, we have 
		$$
		z(P_{ab}) = b^{q^2-q-2}+\frac{h(a,b)}{2a+1} = (b^{-1})^{q+1} =\text{N}_{\F_{q^2}/\F_q}(b^{-1}) \in \F_{q};
		$$
		and for $P_{ab} \in \Supp(D)$ with $2a+1=0$, we have
		$$
		z(P_{ab}) = b^{q^2-q-2}+\tilde{h}(a) = (b^{-1})^{q+1} + \tilde{h}(a) = \text{N}_{\F_{q^2}/\F_q}(b^{-1}) + \tilde{h}(a) \in \F_{q}.
		$$
		As $z(P_{ab}) \neq 0$ and $z(P_{ab}) \in \F_{q}$ for all $P_{ab} \in \Supp(D)$, we obtain that $z(P_{ab})$ is a nonzero square in $\F_{q^2}$ for all $P_{ab} \in \Supp(D)$. 
		Thus, for $w \in \cF$ a function such that  $w(P_i)^2=z(P_i)$ for all $1 \le i \le n$ and as $(dh/h)=(G+(w)) + (H+(w)) -D$,  by Proposition \ref{dualAGcode} we obtain
		$$ \CL(D, G +(w) )^\perp= \CL(D, H + (w)). $$
		Hence 
		$$\CL(D, G + (w)) \cap \CL(D, H + (w)) = \textbf{w} \cdot (\CL(D, G) \cap \CL(D, H)) = \{0\},$$ 
		where $\textbf{w}=(w(P_1), \dots, w(P_n))$. Therefore, $\CL(D, G+(w))$ is an LCD code, and the theorem follows.
	\end{proof}
	\begin{corollary}
		\label{CorCodigos}
		Let $q$ be odd. Then there are LCD codes over $\F_{q^2}$ with parameters
		\begin{align*}
			& [2q^2-q-3, q^2-3, \ge q^2-q+1-\tfrac{q-1}{2}],\\
			& [2q^2-q-3, q^2-q-3, \ge q^2+1-\tfrac{q-1}{2}], \\
			& [2q^2-q-3, q, \ge 2q^2-2q-2-\tfrac{q-1}{2}] \quad \mbox{and} \\
			& [2q^2-q-3, q^2-q-1, q^2-1-\tfrac{q-1}{2}]
		\end{align*}
	\end{corollary}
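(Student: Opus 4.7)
The plan is to apply Theorem~\ref{lcdx2+x} four times, once for each claimed family of LCD codes. The hypotheses of that theorem fix both $\gcd(G,H) = gQ_0 - Q_\infty$ and $G+H = q(Q_0+Q_1)+(q^2-q-3)(Q_\infty+Q_{\infty}')$, which together force, at each place $P \in \{Q_0, Q_1, Q_\infty, Q_{\infty}'\}$, the unordered pair $\{v_P(G), v_P(H)\}$ to be the pair $\{m_P,\, s_P - m_P\}$ with $m_P = v_P(\gcd(G,H))$ and $s_P = v_P(G+H)$; outside this four-place support, both divisors vanish. Thus admissible pairs $(G,H)$ form a $2^4$-element family indexed by which of $G$ or $H$ receives the ``max'' valuation at each of the four places, and it remains to exhibit four members of this family matching the target dimensions.

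Measuring the increment of $\deg G$ above the base value $\deg \gcd(G,H) = g-1$, raising $G$ to the max at $Q_0$, $Q_1$, $Q_\infty$, and $Q_{\infty}'$ contributes $1$, $q$, $q^2-q-1$, and $q^2-q-3$ respectively. Translating the four target dimensions $k \in \{q^2-3,\ q^2-q-3,\ q,\ q^2-q-1\}$ via $\deg G = k + g - 1$ gives excesses $\deg G - (g-1) \in \{q^2-3,\ q^2-q-3,\ q,\ q^2-q-1\}$, and decomposing each as a subset-sum of the four increments selects the ``upgrade'' sets $\{Q_1, Q_{\infty}'\}$, $\{Q_{\infty}'\}$, $\{Q_1\}$, $\{Q_\infty\}$, respectively. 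For instance, for the first family I would take
\begin{align*}
G &= gQ_0 + qQ_1 - Q_\infty + (q^2-q-3)Q_{\infty}', \\
H &= (g+1)Q_0 + (q^2-q-2)Q_\infty,
\end{align*}
and verify directly that $\gcd(G,H) = gQ_0 - Q_\infty$, that $G+H$ equals the required sum, and that $2g-2 < \deg G, \deg H < n$.

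Once a valid pair $(G,H)$ is in hand for each family, Proposition~\ref{parameters} (using $\deg G > 2g-2$) gives the dimension $k = \ell(G) = \deg G + 1 - g$, and the Goppa bound $d \geq n - \deg G$ yields the claimed minimum distance; a direct substitution of the four excess values recovers precisely the four bounds $q^2-q+1-\tfrac{q-1}{2}$, $q^2+1-\tfrac{q-1}{2}$, $2q^2-2q-2-\tfrac{q-1}{2}$, and $q^2-1-\tfrac{q-1}{2}$. Finally, Theorem~\ref{lcdx2+x} produces an LCD code equivalent to $\CL(D,G)$, and since equivalence preserves $[n,k,d]$ parameters (see Proposition~\ref{EquivProp}), the claimed LCD code exists.

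The main obstacle is essentially bookkeeping: each of the four cases requires its own choice of upgraded places, and one must confirm the degree bounds $2g-2 < \deg G, \deg H < n$ for every case. These inequalities are routine but must be checked, particularly for small $q$ where the margins shrink. No ingredient beyond Theorem~\ref{lcdx2+x}, Riemann--Roch, and the Goppa bound is needed.
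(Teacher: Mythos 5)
Your proposal is correct and follows essentially the same route as the paper: the four divisors produced by your subset-sum decomposition (upgrade sets $\{Q_1,Q_\infty'\}$, $\{Q_\infty'\}$, $\{Q_1\}$, $\{Q_\infty\}$) are exactly the paper's $G_1,\dots,G_4$, and both arguments conclude by checking the hypotheses of Theorem~\ref{lcdx2+x} and reading off the parameters from Proposition~\ref{parameters} and the Goppa bound. The only cosmetic difference is that you organize the choice systematically via the $2^4$ family forced by conditions (ii) and (iii), whereas the paper simply exhibits the divisors; note also that your argument (like the paper's) only yields $d\ge q^2-1-\tfrac{q-1}{2}$ for the fourth code, whose distance the corollary states as an equality.
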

	\begin{proof}
		For $1 \le i \le 4$, we define $\C_i := \CL(D, G_i)$ where
		\begin{align*}
			G_1 & = g Q_0 + q Q_1 - Q_\infty + (q^2-q-3) Q_\infty', \\
			G_2 & = g Q_0  - Q_\infty + (q^2 - q- 3)Q_\infty', \\
			G_3 & = g Q_0 + q Q_1 - Q_\infty \\
			G_4 & = g Q_0 + (q^2-q-2)Q_\infty.
		\end{align*}
		Let $G = G_1$. We have $$ \deg(G) = \frac{q-1}{2}+q-1+q^2-q-3 = \frac{2q^2+q-9}{2},$$
		thus $$q-3 = 2g-2 < \deg(G) < 2q^2-q-3 = \deg(D).$$ 
		Take $H=(g+1)Q_0 + (q^2-q-2)Q_\infty$, and hence $2g-2 < \deg(H) < \deg(D)$. Moreover
		$$ \gcd(G,H) = g Q_0 - Q_\infty$$
		and
		$$ G+H = \gcd(G,H) + \lmd(G,H) = q(Q_0+Q_1)+(q^2-q-3)(Q_\infty + Q_\infty').$$
		Therefore, by the Theorem \ref{lcdx2+x}, $\CL(D,G)=\CL(D,G_1)$ is equivalent to a $[2q^2-q-3, q^2-3, \ge q^2-q+1 - \tfrac{q-1}{2}]$-LCD code over $\F_{q^2}$. The proof for the other $G_i$'s follows analogously, and we omit the details.
	\end{proof}

		\begin{example}
			Consider the function field $\cF=\F_{5^2}(x,y)$ with $y^6=x^2+x$ over $\F_{5^2}$ of genus $g=(5-1)/2=2$. For $D=(h)_0$ with $h=(y^{24}-1)/(2x+1)$, we have $n=\deg D= 42$. Further, for $Q_0, Q_1$ the zeros of $x^2+x$ in $\cF$, consider
			\begin{align*}
				G_1 = 2Q_0 + 5Q_1 + 18Q_\infty \quad & \mbox{and} \quad H_1 =2Q_0 - Q_\infty + 18Q_\infty' \end{align*}
			
			Then, by Corollary \ref{CorHyperLCP1}, $\left( \CL(D, G_1), \CL(D, H_1) \right)$ is an LCP of codes over $\F_{5^2}$ with parameters $[42, 24,  17]$ and $[42, 18,  23]$, respectively.
			Now, considering
			\begin{align*}
				G_2 = 2Q_0 + 5Q_1 - Q_\infty + 17Q_\infty' \quad & \mbox{and} \quad H_2 =3Q_0 + 18Q_\infty,
			\end{align*}
			by Theorem \ref{lcdx2+x} and Corollary \ref{CorCodigos},
			$\left( \CL(D, G_2), \CL(D, H_2) \right)$ is an LCP of codes over $\F_{5^2}$ with parameters $[42, 22,  19]$ and $[42, 20,  21]$, respectively. Furthermore, we have
			$\CL(D, G_2)^\perp=\textbf{z}\cdot\CL(D, H_2)$ where $\textbf{z}=(z(P_1), \dots, z(P_{42}))$ and $z=y^{18}+2\frac{y^{24}-1}{(2x+1)^2}$. The minimum distance of the codes was computed using the free online version of Magma Computational Algebra System, see \cite{Magma}.
		\end{example}

	\section{LCPs of AG codes over elliptic function fields}
	\label{SectionElliptic}
	
	In this section, we present the construction of LCPs of codes over elliptic curves. Let $\cE/\fq$ be an elliptic function field over $\F_q$, and $\cO$ be the only place at infinity of $\cE(\overline{\F}_q)$. It is well known that there is an isomorphism between $\Jac(\cE)$, the Jacobian of $\cE$, and $\cE(\F_q)$,  the set of rational places of $\cE/\F_q$. In particular, $\cE(\F_q)$ can be viewed as the abelian group $(\Jac(\cE) , \oplus)$ with $\cO$ as the neutral element (see \cite[Section 6.1]{STICH2008} or \cite{silverman2009}). It is also well known that the  group $\Jac(\cE)$ has the form $\Z/m\Z \times \Z/mk \Z$ for some positive integers $m$ and $k$. For a description of all possible groups, see \cite{ruck1987,Voloch1988}.
	
	Let $P_1, \dots, P_n \in \Jac(\cE)$ be $n$ distinct rational places, and let $D=P_1 + \dots + P_n$. For a divisor $G \in \Div(\cE)$ such that $0 < \deg(G) < n$ and $\Supp(G) \cap \Supp(D) = \emptyset$, by  Proposition \ref{parameters}, the elliptic code $\CL(D,G)$ is an $[n, k, d]$-code where 
	\begin{equation}
		\label{ellcodesparameters}
		k=\deg(G) \quad \mbox{and} \quad d=n-k \mbox{ or } d=n-k+1.
	\end{equation}
	Moreover, $\CL(D,G)$ is not MDS if and only if there are $k$ distinct places $P_{i_1}, \dots, P_{i_k} \in \Supp(D)$ such that $G \sim P_{i_1}+ \cdots + P_{i_k}$ or, equivalently, $G = P_{i_1} \oplus \cdots \oplus P_{i_k}$ in $\Jac(\cE)$.
	
	In an elliptic function field, we observe a divisor $A \in \Div(\cE)$ of degree zero is non-special if and only if $A$ is not principal. Next, we describe some LCPs of elliptic codes. 
	
	\begin{theorem}\label{LcpElliptic1}
		Let $\cE$ be the function field of an elliptic curve over $\F_q$.
		Suppose the number $\#\Jac(\cE)$ of $\fq$-rational places in $\cE$  is odd or $\#\Jac(\cE)$ is even and $\Jac(\cE)$ contains more than one element of order $2$. Fix $P_0 \in \Jac(\cE)$ a rational place of $\cE$ distinct from $\cO$ and denote $\alpha =\Ord(P_0) > 1$ its order in $\Jac(\cE)$. Let $D := \sum_{P \in \Jac(\cE) \setminus \{P_0, \cO\}} P$ and $n := \deg (D)$. For any two positive integers $a,b$ such that 
		$$ 
		0 < b - a < n, \quad a \not\equiv 0 \pmod{\alpha} \quad \mbox{and} \quad b \not\equiv -1 \pmod{\alpha},
		$$
		we have that $$(\C_\LL(D, a P_0 + (n-b) \cO), \, \C_\LL(D, b P_0 - a \cO ))$$ is an LCP of elliptic codes over $\F_q$.
	\end{theorem}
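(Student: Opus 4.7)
The plan is to verify the three hypotheses of Theorem \ref{thmlcp3.5} for the divisors $G := aP_0 + (n-b)\cO$ and $H := bP_0 - a\cO$, using $g(\cE)=1$ so that the target degree for $\gcd(G,H)$ is $g-1=0$ and ``non-special'' means ``non-principal'' in degree zero.

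First I would check the degree and dimension conditions. Clearly $\deg(G)=n-(b-a)$ and $\deg(H)=b-a$, which both lie strictly between $2g-2=0$ and $n$ thanks to $0<b-a<n$. Since $g=1$, Riemann--Roch gives $\ell(G)=\deg(G)$ and $\ell(H)=\deg(H)$, so $\ell(G)+\ell(H)=n$. Next, since $b>a>0$ and $-a<n-b$, taking valuations place by place yields
\[
\gcd(G,H)=aP_0-a\cO,\qquad \lmd(G,H)=bP_0+(n-b)\cO,
\]
both with the expected degrees ($0$ and $n$, respectively).

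The decisive step is checking non-speciality. Under the isomorphism $\operatorname{Pic}^0(\cE)\cong \Jac(\cE)$, a degree-zero divisor $\sum n_i P_i$ is principal iff $\bigoplus n_i\odot P_i=\cO$ in $(\Jac(\cE),\oplus)$. For $\gcd(G,H)=a(P_0-\cO)$, this image is $a\odot P_0$, which is $\cO$ iff $\alpha \mid a$; the hypothesis $a\not\equiv 0\pmod\alpha$ rules this out. For $\lmd(G,H)-D$, I would exploit the classical fact that in a finite abelian group the sum of all elements equals the unique element of order $2$ if one exists, and equals the identity otherwise. The standing hypothesis on $\Jac(\cE)$ (odd order, or even order with more than one element of order $2$) is exactly designed so that $\bigoplus_{P\in\Jac(\cE)} P=\cO$. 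Consequently
\[
\bigoplus_{P\in\Jac(\cE)\setminus\{P_0,\cO\}} P \;=\; \cO\ominus P_0\ominus\cO \;=\; \ominus P_0,
\]
and therefore the image of $\lmd(G,H)-D=bP_0+(n-b)\cO-D$ in $\Jac(\cE)$ is
\[
b\odot P_0 \;\ominus\;\bigl(\ominus P_0\bigr) \;=\; (b+1)\odot P_0,
\]
which is nonzero precisely when $b\not\equiv -1\pmod\alpha$. Thus both $\gcd(G,H)$ and $\lmd(G,H)-D$ are non-special.

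Having verified (i)--(iii) of Theorem \ref{thmlcp3.5}, the pair $(\CL(D,G),\CL(D,H))$ is LCP. I expect the only subtle point to be the group-theoretic identity $\bigoplus_{P\in\Jac(\cE)} P=\cO$; everything else is a routine valuation and degree bookkeeping exercise. The hypothesis on $\Jac(\cE)$ is essentially the minimum one needed to make this sum vanish, and the two congruence conditions on $a$ and $b$ correspond exactly to preventing each of the two relevant degree-zero divisors from becoming principal.
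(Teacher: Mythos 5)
Your proposal is correct and follows essentially the same route as the paper's proof: verify the hypotheses of Theorem \ref{thmlcp3.5} by computing $\gcd(G,H)=a(P_0-\cO)$ and $\lmd(G,H)=bP_0+(n-b)\cO$, and then use the identity $\bigoplus_{P\in\Jac(\cE)}P=\cO$ (valid under the stated hypothesis on $\Jac(\cE)$) to reduce non-speciality of the two degree-zero divisors to the congruence conditions $a\not\equiv 0$ and $b\not\equiv -1 \pmod{\alpha}$. The only cosmetic difference is that you make explicit the general group-theoretic fact behind the vanishing of the sum of all elements, which the paper asserts without comment.
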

	\begin{proof}
		It is enough to prove that the divisors $G=aP_0 + (n-b) \cO$ and $H=bP_0 - a \cO$ satisfy the hypothesis in Theorem \ref{thmlcp3.5}. From $0 < a < b < n+a$, we have 
		$
		0 < \deg(G), \, \deg(H) < n, \, \gcd(G,H)  = a P_0 - a \cO$ and $ \lmd(G,H) = b P_0 + (n-b) \cO.$
		Clearly,
		$\dim(\CL(D,G)) + \dim(\CL(D,H)) = \deg(G) + \deg(H) = n.$
		
		As $\Jac(\cE)$ is an abelian group with $\#\Jac(\cE)$ being odd or $\Jac(\cE)$ containing more than one element of order $2$, we have
		\begin{equation}
			\label{ellsum01}
			\sum_{P \in \Jac(\cE)} P = \cO \quad \mbox{in } \Jac(\cE).
		\end{equation}
		Moreover, $\lmd(G,H)-D$ is a principal divisor if and only if $\lmd(G,H) \circleddash D = \cO$ in $\Jac(\cE)$,  where, for $P \in \Jac(\cE)$, $\circleddash P$ denotes the inverse of $P$ in $\Jac(\cE)$ with respect to $\oplus$. On the other hand, by equation $(\ref{ellsum01})$ we have
		$$\lmd(G,H) \circleddash D  = bP_0 \circleddash D 
		= (b+1)P_0 \circleddash \sum_{P \in \Jac(\cE)} P 
		= (b+1)P_0\neq \cO,$$
		since $b +1 \not\equiv 0 \pmod{\alpha}$. Since $\deg(\lmd(G,H)-D)=0$ we conclude $\lmd(G,H)-D$ is non-special.
		
		Now,  since $\gcd(G,H)=a(P_0-\cO)$ and $a \not\equiv 0 \pmod{\alpha}$, we have $a(P_0-\cO) \neq \cO$ in $\Jac(\cE)$, that is, $\gcd(G,H)$ is also a non-special divisor. This concludes the proof.
	\end{proof}
	\begin{corollary}
		\label{Corell1} Let $\cE$ and $D$ be as in Theorem \ref{LcpElliptic1}.
		Suppose $\Jac(\cE)$ is a cyclic group of odd order $\alpha \ge 5$. Let $P_0$ be a generator of $\Jac(\cE)$. Then for any positive integers $a$ and $b$ with $a < b < \alpha - 1$, we have 
		$$(\C_\LL(D, a P_0 + (\alpha-b-2) \cO), \, \C_\LL(D, b P_0 - a \cO ))$$ is an LCP of elliptic codes over $\F_q$.
	\end{corollary}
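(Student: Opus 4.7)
The plan is to show that Corollary \ref{Corell1} follows as a direct specialization of Theorem \ref{LcpElliptic1}. First I would observe that the hypothesis of the corollary already guarantees the global hypothesis of the theorem: the Jacobian has odd cardinality $\#\Jac(\cE) = \alpha$, so the first alternative in the parity assumption of Theorem \ref{LcpElliptic1} is in force; and since $\alpha \ge 5$, the generator $P_0$ has order $\Ord(P_0) = \alpha > 1$, so in particular $P_0 \neq \cO$. Moreover, because $D$ is defined by summing over $\Jac(\cE) \setminus \{P_0, \cO\}$, the integer $n = \deg(D)$ equals $\alpha - 2$.

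Next I would match the divisors: the corollary's divisors are
$$ G = aP_0 + (\alpha - b - 2)\cO = aP_0 + (n-b)\cO \quad \text{and} \quad H = bP_0 - a\cO, $$
which are exactly the divisors appearing in Theorem \ref{LcpElliptic1}. So it only remains to verify the three arithmetic conditions imposed there, namely
$$ 0 < b - a < n, \qquad a \not\equiv 0 \pmod{\alpha}, \qquad b \not\equiv -1 \pmod{\alpha}. $$
From $1 \le a < b < \alpha - 1 = n + 1$ we get $b - a > 0$ and $b - a \le b - 1 \le \alpha - 3 < n$, giving the first condition. The hypothesis $1 \le a < \alpha$ yields the second. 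The strict inequality $b < \alpha - 1$ gives $b \not\equiv \alpha - 1 \equiv -1 \pmod \alpha$, which is the third.

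Once these three inequalities are confirmed, Theorem \ref{LcpElliptic1} applies verbatim and produces the LCP of elliptic codes in the statement. There is no genuine obstacle here beyond the bookkeeping required to translate $\alpha - b - 2$ back into $n - b$; the work of the corollary is entirely reduced to checking that the natural restrictions $1 \le a < b < \alpha - 1$ precisely encode the three congruence and bounding conditions of the more general theorem.
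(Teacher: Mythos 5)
Your proof is correct and follows the same route as the paper: both reduce the corollary to Theorem \ref{LcpElliptic1} by observing that $n=\alpha-2$ and that $1\le a<b<\alpha-1$ forces $0<b-a<n$, $a\not\equiv 0\pmod{\alpha}$ and $b\not\equiv -1\pmod{\alpha}$. Your write-up is simply a more detailed version of the paper's two-line verification.
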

	\begin{proof}
		Note that $a < b< \alpha-1$ implies $a \not\equiv 0 \pmod{\alpha}$ and $b \not\equiv -1 \pmod{\alpha}$.
		Now apply Theorem \ref{LcpElliptic1} with $n=\alpha-2$. 
	\end{proof}
	\begin{corollary}
		\label{Corell2}
		Let $\F_q$ be such that $\emph{char}(\F_q)>2$ and $\cE$ be the function field of $y^2=(x-e_1)(x-e_2)(x-e_3)$ over $\F_q$, where $e_1, e_2, e_3 \in \F_q$ are pairwise distinct.   Define $D := \sum_{P \in \Jac(\cE) \setminus \{P_0, \cO\}} P$ of degree $n$. For any $a, b$ positive integers such that 
		$$
		0 < b-a < n, \quad a \mbox{ odd} \quad  \mbox{and} \quad b  \mbox{ even}, 
		$$ and  $P_0$ in $\{ P_{e_1 0}, P_{e_2 0}, P_{e_3 0} \}$ we have
		$$ 
		\left( \CL(D, aP_0 + (n-b)\cO), \, \CL(D, bP_0 - a\cO) \right) 
		$$ 
		is an LCP of elliptic codes.
	\end{corollary}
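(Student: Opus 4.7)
The plan is to recognize Corollary \ref{Corell2} as a direct specialization of Theorem \ref{LcpElliptic1} in the case where $P_0$ is a nontrivial $2$-torsion point of $\Jac(\cE)$, so the proof reduces to verifying the hypotheses of that theorem in this particular setting.

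First I would identify the arithmetic of the chosen base point. Since $\mathrm{char}(\F_q) > 2$ and $(x-e_1)(x-e_2)(x-e_3)$ has three distinct roots in $\F_q$, each place $P_{e_i 0}$ is totally ramified in the degree-$2$ extension $\cE/\F_q(x)$, so $v_{P_{e_i 0}}(x-e_i)=2$ and hence $(x-e_i) = 2 P_{e_i 0} - 2\cO$ is principal. This shows $2 P_{e_i 0} \sim 2 \cO$, so $P_{e_i 0}$ has order exactly $2$ in $\Jac(\cE)$ for $i=1,2,3$. In particular, $\Jac(\cE)$ contains (at least) three distinct elements of order $2$, and the second alternative in the hypothesis of Theorem \ref{LcpElliptic1} is satisfied, independently of the parity of $\#\Jac(\cE)$.

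Next I would translate the congruence conditions. For any $P_0 \in \{P_{e_1 0}, P_{e_2 0}, P_{e_3 0}\}$ we have $\alpha = \Ord(P_0) = 2$, so the conditions $a \not\equiv 0 \pmod{\alpha}$ and $b \not\equiv -1 \pmod{\alpha}$ from Theorem \ref{LcpElliptic1} become exactly $a$ odd and $b$ even (using $-1 \equiv 1 \pmod 2$). Together with the assumed bound $0 < b-a < n$, every hypothesis of Theorem \ref{LcpElliptic1} is in force, and the conclusion is that $(\CL(D, aP_0 + (n-b)\cO),\, \CL(D, bP_0 - a\cO))$ is an LCP of elliptic codes.

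There is essentially no nontrivial obstacle here beyond the identification of the Weierstrass points as the nontrivial $2$-torsion of $\Jac(\cE)$, which is the only place where the specific model $y^2=(x-e_1)(x-e_2)(x-e_3)$ and the hypothesis $\mathrm{char}(\F_q)>2$ are used. The remainder of the argument is a purely formal unpacking of Theorem \ref{LcpElliptic1}, so I would present the proof as a short verification paragraph rather than a long calculation.
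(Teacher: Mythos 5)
Your proposal is correct and follows the same route as the paper: both reduce the corollary to Theorem \ref{LcpElliptic1} by observing that $P_{e_1 0}, P_{e_2 0}, P_{e_3 0}$ are three distinct elements of order $2$ in $\Jac(\cE)$ and that, with $\alpha=2$, the congruences $a \not\equiv 0$ and $b \not\equiv -1 \pmod{2}$ are exactly ``$a$ odd, $b$ even.'' Your explicit verification via the principal divisor $(x-e_i)=2P_{e_i 0}-2\cO$ is a welcome detail that the paper leaves implicit, but it is not a different argument.
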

	\begin{proof}
		The $\Jac(\cE)$ contains more than one element of order $2$, namely rational places  $P_{e_1 0}$, $P_{e_2 0}$ and  $P_{e_3 0}$. Moreover, since $a \equiv 1 \pmod{2}$ and $b \equiv 0 \pmod{2}$, the result follows from Theorem \ref{LcpElliptic1}. 
	\end{proof}
	
	In Theorem \ref{LcpElliptic1}, we use the fact that $\Jac(\cE)$ contains more than one element of order $2$ or has order odd. Next, we consider the case when $\#\Jac(\cE)$ is even and $\Jac(\cE)$ contains exactly one element of order $2$. 
	\begin{theorem}
		\label{LcpElliptic2}
		Let $\cE$ be the function field of an elliptic curve over $\F_q$.
		Suppose that $\Jac(\cE)$ contains exactly one element of order $2$. Fix $P_0 \in \Jac(\cE) \setminus \{ \cO \}$ with $\alpha := \Ord(P_0)$.
		Let $D = \sum_{P \in \Jac(\cE) \setminus \{P_0, \cO\}} P$ and $n = \deg (D)$. Let $a$, $b$ be two positive integers such that $0 < b - a < n$ and  $a \not\equiv 0 \pmod{ \alpha}$. If
		\begin{enumerate}[i)]
			\item $\alpha$ is odd; or
			\item $\alpha$ is even with $\gcd(\alpha, b+1) \neq \frac{\alpha}{2}$,
		\end{enumerate} 
		then 
		$$
		(\C_\LL(D, a P_0 + (n-b) \cO), \, \C_\LL(D, b P_0 - a \cO ))
		$$
		is an LCP of elliptic codes.
		
	\end{theorem}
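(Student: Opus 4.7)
The plan is to follow the proof of Theorem \ref{LcpElliptic1} and apply Theorem \ref{thmlcp3.5} to the divisors $G = aP_0 + (n-b)\cO$ and $H = bP_0 - a\cO$, with the crucial adjustment that now $\sum_{P\in\Jac(\cE)} P = T$, where $T$ denotes the unique element of order $2$ in $\Jac(\cE)$. This is a standard fact: in a finite abelian group, the sum of all elements equals the sum over the $2$-torsion subgroup, which vanishes unless that subgroup has order exactly $2$, in which case it equals the unique order-$2$ element. Using $g = 1$ and the inequalities $0 < b - a < n$ with $a > 0$, I obtain $0 < \deg(G) = a + n - b < n$ and $0 < \deg(H) = b - a < n$, so Riemann--Roch yields $\ell(G) + \ell(H) = \deg(G) + \deg(H) = n$. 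A direct computation gives $\gcd(G, H) = a P_0 - a\cO$ and $\lmd(G, H) = b P_0 + (n-b)\cO$, so $\deg(\gcd(G,H)) = 0 = g - 1$, and hypotheses (i) and (ii) of Theorem \ref{thmlcp3.5} are satisfied.

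For hypothesis (iii), the non-speciality of $\gcd(G,H)$ is exactly as in Theorem \ref{LcpElliptic1}: its class in $\Jac(\cE)$ is $a \cdot P_0$, which is nonzero since $a \not\equiv 0 \pmod{\alpha}$. For $\lmd(G,H) - D$, a divisor of degree $0$, I compute its class in $\Jac(\cE)$ using $\sum_{P \in \Jac(\cE)\setminus\{P_0,\cO\}} P = T \ominus P_0$, obtaining
\[
\bigl[\lmd(G,H) - D\bigr] \;=\; b P_0 \,\ominus\, (T \ominus P_0) \;=\; (b+1)P_0 \,\ominus\, T.
\]
Hence $\lmd(G,H) - D$ is non-special if and only if $(b+1) P_0 \neq T$ in $\Jac(\cE)$.

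The main obstacle, with everything else being routine bookkeeping, is verifying this last inequality under hypotheses (i) or (ii). In case (i), $\alpha$ is odd, so $\langle P_0\rangle$ has odd order and contains no element of order $2$, while $T$ has order $2$; hence $(b+1) P_0 \in \langle P_0\rangle$ cannot equal $T$. In case (ii), $\alpha$ is even, so $(\alpha/2)P_0 \in \langle P_0\rangle$ has order $2$; by the uniqueness of the order-$2$ element of $\Jac(\cE)$, this forces $T = (\alpha/2)P_0$. Then $(b+1)P_0 = T$ is equivalent to $b+1 \equiv \alpha/2 \pmod{\alpha}$, and writing $b+1 = (\alpha/2)k$ one sees $\gcd(\alpha, b+1) = (\alpha/2)\gcd(2,k)$, so the congruence is equivalent to $\gcd(\alpha, b+1) = \alpha/2$. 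The hypothesis excludes this case, so $(b+1)P_0 \neq T$, and Theorem \ref{thmlcp3.5} yields the LCP conclusion.
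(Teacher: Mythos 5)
Your proposal is correct and follows essentially the same route as the paper: verify the hypotheses of Theorem \ref{thmlcp3.5}, use $\sum_{P\in\Jac(\cE)}P=\tilde P$ (the unique order-$2$ element) to reduce non-speciality of $\lmd(G,H)-D$ to $(b+1)P_0\neq\tilde P$, and split into the cases $\alpha$ odd ($\tilde P\notin\langle P_0\rangle$) and $\alpha$ even ($\tilde P=(\alpha/2)P_0$). The only cosmetic difference is that the paper phrases the even case via the order formula $\Ord((b+1)P_0)=\alpha/\gcd(\alpha,b+1)$ while you argue via the congruence $b+1\equiv\alpha/2\pmod{\alpha}$; these are equivalent.
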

	\begin{proof}	We will show that $D$, $G=aP_0 + (n-b) \cO$ and $H=bP_0 - a \cO$ satisfy the conditions in Theorem \ref{thmlcp3.5}. Observe that $\ell(G)+\ell(H)=n$. Since $\Jac(\cE)$ contains exactly one element of order $2$, 
		\begin{equation}
			\label{ellsum02}
			\sum_{P \in \Jac(\cE)} P = \tilde{P} \quad \mbox{in} \quad \Jac(\cE),
		\end{equation}
		where $\tilde{P}$ is the only element in $\Jac(\cE)$ of order $2$. Adding on $\Jac(\cE)$ and using (\ref{ellsum02}), we obtain
		$$
		\lmd(G,H) \circleddash D  = (b+1)P_0 \circleddash \tilde{P},
		$$ 
		where the last equality follows from Equation (\ref{ellsum02}).
		Hence $\lmd(G,H) \circleddash D = \cO$ on $\Jac(\cE)$ if and only if $(b+1)P_0$ has order $2$ in $\Jac(\cE)$. Note that if $\alpha$ is odd, then $\tilde{P} \notin \langle P_0 \rangle$. Thus $(b+1)P_0 \neq \tilde{P}$ for any $b$. Suppose $\alpha$ is even. Then $\tilde{P} \in \langle P_0 \rangle$ as $\Jac(\cE)$ contains only one element of order $2$, so
		$$
		\Ord((b+1)P_0) = \frac{ \alpha}{\gcd(\alpha, b+1)}.
		$$ 
		If $\alpha$ is odd or $\alpha$ is even with $\gcd(\alpha, b+1) \neq \frac{\alpha}{2}$, then  
		$$
		\gcd(G,H)=a(P_0 \circleddash \cO) \neq \cO \quad \mbox{and} \quad \lmd(G,H) \circleddash D \neq \cO \ \mbox{ in } \ \Jac(\cE),
		$$
		that is, $\gcd(G,H)$ and $\lmd(G,H)-D$ are both non-special divisors. This concludes the proof.
	\end{proof}
	\begin{corollary}
		\label{Corell3}
		With  assumptions as in Theorem \ref{LcpElliptic2}, let $P_0 \in \Jac(\cE)$ be the only element of order $2$ in $\Jac(\cE)$. Then for positive integers $0 < b-a < n$ such that $a$ and $b$ are odd numbers, we have
		$$(\C_\LL(D, a P_0 + (n-b) \cO), \, \C_\LL(D, b P_0 - a \cO ))$$
		is an LCP of elliptic codes.
	\end{corollary}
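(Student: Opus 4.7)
The proof is a direct verification that the hypotheses of Theorem \ref{LcpElliptic2} are satisfied under the specialization $P_0 = \tilde{P}$, the unique element of order $2$ in $\Jac(\cE)$. The plan is therefore to unpack the numerical conditions of that theorem in the case $\alpha = 2$ and check that the assumptions of the corollary imply them.

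First I would observe that $\alpha := \Ord(P_0) = 2$, since by hypothesis $P_0$ is the unique element of order $2$. In particular $\alpha$ is even, so we are in case (ii) of Theorem \ref{LcpElliptic2}. The condition $0 < b - a < n$ is part of the assumption of the corollary, so it remains only to verify
\begin{enumerate}[i)]
\item $a \not\equiv 0 \pmod{\alpha}$, and
\item $\gcd(\alpha, b+1) \neq \alpha/2$.
\end{enumerate}
Since $\alpha = 2$ and $a$ is odd, condition (i) reduces to $a \not\equiv 0 \pmod 2$, which is immediate. For condition (ii), $\alpha/2 = 1$, and since $b$ is odd, $b+1$ is even, so $\gcd(2, b+1) = 2 \neq 1$, as required.

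With both conditions satisfied, Theorem \ref{LcpElliptic2} applies and yields that
$$\left( \CL(D, a P_0 + (n-b) \cO), \, \CL(D, b P_0 - a \cO) \right)$$
is an LCP of elliptic codes. There is no genuine obstacle here: the content of the corollary is simply to translate the parity hypothesis "$a,b$ odd" into the two congruence conditions in Theorem \ref{LcpElliptic2} when the chosen base place is the $2$-torsion point.
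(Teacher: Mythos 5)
Your proof is correct and follows essentially the same route as the paper: specialize to $\alpha = 2$, note $a$ odd gives $a \not\equiv 0 \pmod{2}$, and $b$ odd gives $\gcd(2,b+1)=2 \neq 1 = \alpha/2$, so case (ii) of Theorem \ref{LcpElliptic2} applies. (The paper's own proof contains a small typo, writing $\frac{\alpha}{\alpha}$ where $\frac{\alpha}{2}$ is meant; your version states the condition correctly.)
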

	\begin{proof}
		Let $P_0 \in \Jac(\cE)$ be an element of order $\alpha=2$. Since $a$ and $b$ are odd, $\gcd(\alpha, b+1) \neq 1 = \frac{\alpha}{\alpha}$ and $a \not\equiv 0 \pmod{\alpha}$, so the result follows by Theorem \ref{LcpElliptic2}.
	\end{proof}
	\begin{corollary}
		\label{Corell4}
		With the same assumptions of the Theorem \ref{LcpElliptic2}, let $P_0 \in \Jac(\cE)$ be an element of odd order greater than $1$ in $\Jac(\cE)$. Then for positive integers $1 < b < n+1$, we have
		$$(\C_\LL(D,  P_0 + (n-b) \cO), \, \C_\LL(D, b P_0 - \cO ))$$
		is an LCP of elliptic codes. In particular, we obtain LCPs with $\CL(D, G)$ having all possible dimensions $1 \le k \le n-1$.
	\end{corollary}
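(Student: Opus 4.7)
The plan is to deduce this as a direct specialization of Theorem \ref{LcpElliptic2} with $a=1$, and then to extract the dimension statement from the standard formula for elliptic AG codes.

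First I would fix $a := 1$ and let $b$ be any integer with $1 < b < n+1$. The condition $0 < b - a < n$ becomes $0 < b - 1 < n$, which is immediate. Since $\alpha := \Ord(P_0) > 1$ by hypothesis, we trivially have $1 \not\equiv 0 \pmod{\alpha}$, so the requirement $a \not\equiv 0 \pmod{\alpha}$ of Theorem \ref{LcpElliptic2} is met. Because $\alpha$ is assumed odd, hypothesis (i) of Theorem \ref{LcpElliptic2} applies (no parity condition on $b$ is needed), and we conclude that
$$
\bigl(\C_\LL(D,\,P_0 + (n-b)\cO),\ \C_\LL(D,\,bP_0 - \cO)\bigr)
$$
is an LCP of elliptic codes.

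For the dimension assertion, observe that for $G := P_0 + (n-b)\cO$ with $2 \le b \le n$ we have $\deg(G) = n - b + 1$, which satisfies $0 < \deg(G) < n$. Since $\cE$ has genus $g = 1$, the inequality $2g - 2 = 0 < \deg(G) < n$ puts us in the range covered by Proposition \ref{parameters} (equivalently, Equation (\ref{ellcodesparameters})), so $\dim \C_\LL(D, G) = \deg(G) = n - b + 1$. As $b$ varies over $\{2, 3, \dots, n\}$, the integer $k = n - b + 1$ sweeps through $\{1, 2, \dots, n-1\}$, yielding the promised family of LCPs with $\C_\LL(D, G)$ realizing every dimension $1 \le k \le n-1$.

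There is essentially no obstacle: the result is a one-line instance of the preceding theorem, the only thing to check being that the specific choice $a = 1$ together with $\alpha$ odd automatically satisfies all hypotheses, which is what makes the "all possible dimensions" conclusion available uniformly in $b$.
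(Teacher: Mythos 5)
Your proof is correct and follows exactly the paper's argument: specialize Theorem \ref{LcpElliptic2} to $a=1$ (where $\alpha>1$ gives $a\not\equiv 0\pmod{\alpha}$ and oddness of $\alpha$ puts you in case (i) with no constraint on $b$), then read off $\dim\C_\LL(D,G)=\deg(G)=n+1-b$ from the elliptic-code dimension formula. The only difference is that you spell out the verifications the paper leaves implicit.
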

	\begin{proof}
		Take $a=1$ in Theorem \ref{LcpElliptic2} and observe that $\C_\LL(D,  P_0 + (n-b) \cO)$ is an $[n, n+1-b]$-code over $\F_q$.
	\end{proof}

	\begin{proposition}
		\label{propE1} Let $p > 3$ be a prime such that $p \equiv 3 \pmod{4}$. Let $\cE/\F_p$ be the elliptic function field of the curve $y^2=x^3+x$ over $\F_p$. Assume that $P_0 \in \Jac(\cE)$ is the common zero of $x$ and $y$ in $\cE$. For $a$, $b$ positive integers such that $0 < b - a < p-1$, $a$ and $b$ are odd, define
		\begin{align*}
			D & =\sum_{P \in \Jac(\cE) \setminus \{\cO, P_0\}} P, \\
			G & = aP_0 + (p-1-b)\cO \quad \mbox{and} \quad H  = bP_0 - a\cO.
		\end{align*}
		Then $\left( \CL(D,G), \CL(D, H) \right)$ is an LCP of codes over $\F_p$.
		
	\end{proposition}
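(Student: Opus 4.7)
The plan is to reduce the claim to a direct application of Corollary \ref{Corell3}, taking $P_0 = (0,0)$ as the only element of order $2$ in $\Jac(\cE)$. To invoke the corollary, I need to verify two preliminary facts about the elliptic function field $\cE/\F_p$ of $y^2 = x^3 + x$: first, that $P_0$ has order $2$ and is the unique element of order $2$ in $\Jac(\cE)$; and second, that $\#\Jac(\cE) = p + 1$, so that $\deg(D) = p - 1 = n$, matching the statement of the proposition.

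For the first fact, the $\F_p$-rational $2$-torsion elements correspond to affine points $(\alpha, 0)$ on the curve, i.e., to $\F_p$-roots of $x(x^2 + 1)$. The root $x = 0$ yields $P_0$; the other candidates satisfy $x^2 = -1$, which has no solution in $\F_p$ when $p \equiv 3 \pmod{4}$, since $-1$ is then a non-square. Hence $P_0$ is the unique rational element of order $2$ in $\Jac(\cE)$, and in particular $\Ord(P_0) = 2$.

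For the second fact, the curve $y^2 = x^3 + x$ is supersingular precisely when $p \equiv 3 \pmod{4}$, so $\#\cE(\F_p) = p + 1$. The cleanest self-contained verification is a character-sum argument: with $\chi$ the quadratic character of $\F_p^*$ (extended by $\chi(0) = 0$), the number of affine solutions of $y^2 = x^3 + x$ equals $p + \sum_{x \in \F_p} \chi(x^3 + x)$, and the substitution $x \mapsto -x$ combined with $\chi(-1) = -1$ (which is exactly where the hypothesis $p \equiv 3 \pmod{4}$ enters) forces that character sum to vanish. Thus $\#\Jac(\cE) = p + 1$ and $\deg(D) = p - 1 = n$.

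With both facts in hand, the hypotheses of Corollary \ref{Corell3} are all satisfied: $P_0$ is the unique element of order $2$ in $\Jac(\cE)$, both $a$ and $b$ are odd, and $0 < b - a < n = p - 1$. Applying the corollary directly produces the LCP pair $(\CL(D, aP_0 + (n-b)\cO), \CL(D, bP_0 - a\cO))$, which is exactly the pair in the statement once $n = p - 1$ is substituted. The only subtle ingredient is the point count $\#\cE(\F_p) = p + 1$; either the supersingularity argument or the short character-sum computation above settles it, and no other new input is needed.
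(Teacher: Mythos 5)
Your proposal is correct and follows essentially the same route as the paper: both reduce the statement to Theorem \ref{LcpElliptic2} (you via its specialization Corollary \ref{Corell3}, which applies verbatim since $P_0=(0,0)$ is the unique rational $2$-torsion element and $a,b$ are odd) after establishing that $\#\Jac(\cE)=p+1$ and that $x^2+1$ has no root in $\F_p$ when $p\equiv 3\pmod 4$. The only difference is cosmetic: the paper cites Silverman for the supersingular point count where you supply the short character-sum computation.
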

	\begin{proof}
		For $p \equiv 3 \pmod{4}$, the elliptic curve given by $y^2=x^3+x$ over $\F_p$ is such that $\#\Jac(\cE)=p+1$ and $\Jac(\cE)$ contains only one element of order $2$, since $-1$ is not a quadratic residue modulo $p$, see \cite[Example 4.5]{silverman2009}. Therefore, the result follows from Theorem \ref{LcpElliptic2}.
	\end{proof}
	We have constructed codes $\CL(D,G)$ over $\F_q$ from elliptic curves with $\Jac(\cE)=\{ \cO, P, P_1,$ $\dots, P_n \}$ and $D=P_1 + \cdots + P_n$. According to \cite[Theorem 6.3]{JANWA1990}, for $q+1 < n < \#\Jac(\cE)$, the elliptic code $\CL(D,G)$ is an optimal code. Hence, there exist many different LCPs of optimal elliptic codes since elliptic curves have more than $q+2$ $\F_q$-rational points.

	\begin{example}
		Let $\cE/\F_8$ be the function field of the elliptic curve $y^2 + y = x^3 + x + 1$ over $\F_8$. We have $\Jac(\cE)$ is a group of order $13$, so it is a cyclic group and every $P \in \Jac(\cE) \setminus \{\cO\}$ has order $13$. Let $P_0 \in \Jac(\cE)\setminus \{\cO \}$ and $D=\sum_{P \in \Jac(\cE)\setminus\{\cO, P_0\} }$.
		
		Let $G=P_0+(11-b)\cO$ and $H=bP_0-\cO$.
		From Theorem \ref{LcpElliptic1} and \cite[Example 4.3]{munuera93}, for any $2 \le b \le 10$ there are LCPs of codes $(\CL(D,G), \CL(D,H))$ with parameters
		$[11,12-b, b-1]$ and  $[11, b-1, 12-b]$ over $\F_8$. For $b=11$, we can also verify that $\CL(D, P_0+9\cO)$ is a $[11, 1, 11]$-code and $\CL(D, 2P_0-\cO)$ is a $[11, 10, 2]$-code. Therefore, 
		$$
		\left( \CL(D, P_0+(11-b)\cO), \CL(D, bP_0-\cO) \right) 
		$$
		is an LCP of non-MDS optimal elliptic codes for any $2 \le b \le 10$ and an LCP of MDS elliptic optimal codes for $b=11$.
	\end{example}
	\bibliographystyle{abbrv}


\end{document}